\definecolor{trp}{rgb}{1,1,1}
\definecolor{red}{rgb}{1,0,.2}
\definecolor{green}{rgb}{0.0,0.4,0.2}
\definecolor{blue}{rgb}{0,0,1}
\newcommand{\new}[1]{}
\newcommand*{\La}{\Lambda}
\newcommand*{\un}[1]{\underline{#1}}
\newcommand*{\be}{\begin{equation}}
\newcommand*{\ee}{\end{equation}}
\newcommand*{\ba}{\begin{aligned}}
\newcommand*{\ea}{\end{aligned}}
\newcommand*{\barr}{\begin{array}{c}}
\newcommand*{\earr}{\end{array}}
\newcommand*{\Ev}{{\mathbb{ E}}}
\newcommand*{\Pv}{{\mathbb{ P}}}
\newcommand*{\ind}{\mathbbm{1}}
\newtheorem{tm}{Theorem}[section]
\newtheorem{lm}[tm]{Lemma}
\newtheorem{df}[tm]{Definition}
\newtheorem{rem}[tm]{Remark}
\newtheorem{ex}[tm]{Example}
\newtheorem{cl}[tm]{Claim}
\newtheorem{cor}[tm]{Corollary}
\begin{document}

\parindent0pt

\title[Graphs generated by fractals]
{Generating hierarchial scale free graphs from fractals}

\author{J\'ulia Komj\'athy}
\address{J\'ulia Komj\'athy, Institute of Mathematics, Technical
University of Budapest, H-1529 B.O.box 91, Hungary
\tt{komyju@math.bme.hu}}

\author{K\'{a}roly Simon}
\address{K\'{a}roly Simon, Institute of Mathematics, Technical
University of Budapest, H-1529 B.O.box 91, Hungary
\tt{simonk@math.bme.hu}}

 \thanks{2000 {\em Mathematics Subject Classification.} Primary
05C80 Secondary 28A80
\\ \indent
{\em Key words and phrases.} Random graphs, Scale-free networks; \\
\indent The research was supported by the NKTH OTKA grant \# 7778}

%\tableofcontents
\begin{abstract}

 Motivated by the hierarchial network model of E. Rav\-asz, A.-L. Barab\'asi, and T. Vicsek \cite{BRV} and \cite{AB}, we introduce deterministic scale-free networks derived from a graph directed self-similar fractal $\Lambda $.
 With  rigorous mathematical results we verify
that our model captures some of the most important features of many real networks: the scale free  and the high clustering properties. We also prove that the diameter is the logarithm of the size of the system.  Using our (deterministic) fractal $\Lambda $ we generate random graph sequence sharing  similar properties.

\end{abstract}
%\date{\today}

\maketitle

%%%%%%%%%%%%%%%%%%%%%%%%%%
%%%%%%%%%%%%%%%%%%%%%%%%%%%%%
%\color{white}
\section{Introduction}
In the last two decades  there have been a considerable  amount of attention paid to the study of complex networks like the World Wide Web, social networks, or biological networks. This resulted in the construction of numerous network models, see e.g. \cite{AB1}, \cite{JKK}, \cite{DM}, \cite{Bollobas}, \cite{LovSzeg} \cite{BJR}.  Most of them use a version of preferential attachment and are of probabilistic nature.
A completely different approach was initiated by
Barab\'asi, Ravasz, and Vicsek \cite{BRV}. They introduced deterministic network models generated by a method which is common in constructing fractals. Their model exhibits hierarchical structure and
the degree sequence obeys power law decay.
To model also the clustering behavior of real networks, Ravasz and Barab\'asi \cite{AB} developed the original model so that their deterministic network model preserved the same power law decay and has
similar clustering behavior to many real networks. Namely, the average local clustering coefficient is independent of the size of the network and the local clustering coefficient decays inversely proportional to the degree of the node.

In this paper we generalize both of the models above. Starting from  an arbitrary initial bipartite graph $G$ on $N$ vertices, we construct a hierarchical sequence of deterministic graphs $G_n$. Namely, $V(G_n)$, the set of vertices  of $G_n$ is $\left\{0,1,\dots ,N-1\right\}^n$.  To construct $G_n$ from $G_{n-1}$, we take $N$ identical copies of $G_{n-1}$, each of them identified with a vertex of $G$.  Then we connect these components in a complicated way described in (\ref{edgerule}). In this way, $G_n$ contains $N^{n-1}$ copies of $G_1$, which are connected in a hierarchical manner, see Figures \ref{G12}, \ref{G3} and \ref{19} for two examples.

There are no triangles in $G_n$. Hence, in order to model the clustering properties of many real networks, we need to extend the set of edges of our graph sequence to destroy the bipartite property. Motivated by \cite{AB}, we add some additional edges to $G_1$ to obtain the (no longer bipartite) graph $\widehat G_1$. Then we build up the graph sequence $\widehat G_n$ as follows: $\widehat G_n$ consist of $N^{n-1}$ copies of $\widehat G_1$, which copies are connected to each other in the same way as they were in $G_n$. So, $\widehat G_n$ and $G_n$ have the same vertex set and their edges only differ at the lowest hierarchical level, that is, within the $N^{n-1}$ copies of $G_1$ and $\widehat G_1$, see Figures \ref{19} and \ref{20}. We give a rigorous proof of the fact that the average local clustering coefficient of $\widehat G_n$ does not depend on the size and the local clustering coefficient of a node with degree $k$ is of order $1/k$.

The embedding of the adjacency matrix of the graph sequence $G_n$ is carried out as follows:
A vertex $\un x=(x_1\dots x_n)$ is identified with the corresponding $N$-adic interval $I_{\un x}$ (see (\ref{15})).
$\Lambda _n$ is the union of those $N^{-n}\times N^{-n}$ squares  $I_{\un x}\times I_{\un y}$
for which the vertices $\un x, \un y$ are connected by an edge in $G_n$. So, $\Lambda_n$ is the most straightforward embedding of the adjacency matrix of $G_n$ into the unit square.
 $\Lambda _n$ turns out to be a nested sequence of compact sets, which can be considered as the $n$-th approximation of a graph directed self-similar fractal $\Lambda $ on the plane, see Figure \ref{La13}.
We discuss connection between the graph theoretical properties of $G_n$ and  properties of the limiting fractal $\La$.

Furthermore, using $\Lambda$ we generate a random graph sequence $G_n^{\text{r}}$ in a way which was
inspired by the $W$-random graphs introduced  by Lov\'asz and  Szegedy \cite{LovSzeg}. See also  Diaconis, Janson \cite{DJ}, which paper contains a list of corresponding references. We show that the degree sequence has power law decay with the same exponent as the deterministic graph sequence $G_n$. Thus we can define a random graph sequence with a prescribed power law decay in a given range. Bollob'as, Janson and Riordan \cite{BJR} considered inhomogeneous random graphs generated by a kernel. Our model is not covered by their construction, since $\La$ is a fractal set of zero two dimensional Lebesgue measure.

\bigskip

The paper is organized as follows: In Section \ref{detmodel} we define the deterministic model and the associated fractal set $\La$. In Section \ref{prop},  we verify the scale free property of $G_n$ (Theorem \ref{16}). We compare the Hausdorff dimension of $\La$ to the power law exponent of the degree sequence of $G_n$.
 Our next result is that both of the diameter of $G_n$ and the average length of shortest path between two vertices
 are of order of the logarithm of the size of $G_n$ (Corollary \ref{diam} and Theorem \ref{averst}).
In Section \ref{clust} we prove the above mentioned properties of the clustering coefficient of $\widehat G_n$ (Theorem \ref{clusteraver} and \ref{clustertetel}). In Section \ref{rand} we describe the randomized model, and in Section \ref{proprand} we prove that the model exhibits the same power law decay as the corresponding deterministic version.

\section{Deterministic model}\label{detmodel}

The model was motivated by the hierarchical graph sequence model in \cite{BRV}, and is given as follows.

\subsection{Description of the model}%\label{descript}

 Let $G$, our \emph{base graph}, be any labeled bipartite graph on the vertex set $\Sigma_1=\left\{0,\dots ,N-1\right\}$. We partition $\Sigma _1$ into the non-empty sets $V_1,V_2$ and one of the end points of any edge is in $V_1$, and the other is in $V_2$. We write $n_i:=|V_i|$, $i=1,2$ for the cardinality of $V_i$.  The edge set of $G$ is denoted by $E(G)$. If the pair $x,y \in \Sigma_1$ is connected by an edge, then this edge is denoted by ${ x \choose y }$, since this notation makes it convenient to follow the labels of the vertices along a path.

 Now we define our graph sequence $\left\{ G_n\right\}_{n\in \mathbb N}$ generated by the base graph $G$.

 The vertex set is $\Sigma_n= \{ (x_1 x_2\dots x_n): x_i \in \Sigma_1\}$, all words of length $n$ above the alphabet  $\Sigma_1$. To be able to define the edge set, we need some further definitions.

  %%%%%%%%%%%%%%%%%%%%%%%%%%%%%

  \begin{df}\label{typ} \

\begin{enumerate}
  \item We assign a type to each element of $\Sigma_1$. Namely,
$$
\text{typ}(x)=
\left\{
  \begin{array}{ll}
    1, & \hbox{if $x \in V_1$;} \\
    2, & \hbox{if $x \in V_2$.}
  \end{array}
\right.
$$
  \item We define the {\bf type} of a word $\un z=(z_1z_2\dots z_n )\in \Sigma _n$ as follows: if all the elements $z_j, j=1,\dots, n$ of $\un z$ fall in the same $V_i$, $i=1,2$ then $\text{typ}(\un z)$ the type of $\un z$ is  $i$. Otherwise $\text{typ}(\un z):=0$.

\item  For  $\un x= (x_1\dots x_n), \un y=(y_1\dots y_n) \in \Sigma_n$ we denote the {\bf common prefix} by
\[  \un x \wedge \un y = (z_1 \dots z_k) \text{ s.t. } x_i=y_i=z_i,  \forall i=0,\dots, k \text{ and } x_{k+1}\neq y_{k+1}. \]
\item Given $\un x= (x_1\dots x_n), \un y=(y_1\dots y_n) \in \Sigma_n$,  the {\bf postfixes} $\tilde {\un x}, \tilde {\un y} \in \Sigma_{n-|\un x \wedge \un y|}$ are determined by
    \[\un x = (\un x \wedge \un y) \tilde {\un x},\  \un y = (\un x \wedge \un y) \un{\tilde y}, \] where the concatenation of the words $\un a,\un b$ is denoted by $\un a \un b$.
\end{enumerate}\end{df}
  %%%%%%%%%%%%%%%%%%%%%%%%
Now we can define the edge set $E(G_n)$. Two vertices $\un x$ and $\un y$ in $G_n$ are connected by an edge if and only if the following assumptions hold:
\begin{description}
  \item[(a)]  One of the postfixes $\un {\tilde x}, \un {\tilde y}$ is of type $1$, the other is of type $2$,
  \item[(b)]  for each $i>|x\wedge y|$, the coordinate pair ${ x_i \choose y_i}$ forms an edge in $G$.
\end{description}
That is, $E(G_n)\subset \Sigma_n\times \Sigma_n$:
\begin{eqnarray}\label{edgerule}
% \nonumber to remove numbering (before each equation)
\nonumber E(G_n)& =& \Bigg\{\! { \un x\choose \un y}\Big|\Big. \un x=\un y \mbox{ or } \\
   &&
\{ \text{typ}(\un {\tilde x}),\text{typ}(\tilde{\un y})\} =\{ 1,2\}, \forall |\un x\wedge \un y|<i\le n,\!{x_i\choose y_i}\!\in\!E(G)
\Bigg\}
\end{eqnarray}
\begin{rem}%\label{selfloopa}
Note that we artificially added all loops to the (otherwise bipartite) graph sequence $G_n$, implying easier calculations later without loss of the important properties. In particular, $G_1$ differs from $G$ only in the loops.
\end{rem}
\begin{rem}[Hierarchical structure of $G_n$]%\label{hierarch}

For every initial digit $x \in \{0,1, \dots, N-1\}$, consider the set $W_x$ of vertices $(x_1 \dots x_n)$ of $G_n$ with $x_1=x$. Then the induced subgraph on $W_x$ is identical to $G_{n-1}$.
\end{rem}

We write $\deg_n(\un x)$ for the \emph{degree of a vertex} in $G_n$, including the loop which increases the degree by $2$. However, for an $x\in \Sigma_1$, $\deg x$ denotes degree of $x$ in $G$. In particular $\deg_1(x)=\deg(x)+2$.
In what follows, we will frequently use $\ell(\un x)$, the length of the longest block from backwards in $\un x$ which has a nonzero type,
\be \label{ell}
\ell (\un x):= \max_{i\in \mathbb N}\left\{ \text{typ}(x_{n-i+1},\dots x_n) \in \{1,2\}\right\}
\ee

\begin{rem}\label{5}

The degree of a node $\un x\in \Sigma _n$
\[%\label{7}
  \deg_n(\un x)=2+S(\un x)\cdot \deg(x_n),
\]
where
\begin{eqnarray}\label{6}
%  to remove numbering (before each equation)
 \nonumber  S(\un x):\!\! &=&\!\! 1+\deg(x_{n-1})+\cdots +\deg(x_{n-1})\deg(x_{n-2})\cdots \deg(x_{n-\ell(\un x)+1}) \\
   &=&  \sum_{r=0}^{\ell(\un x)-1} \left(\prod_{j=1}^r \deg(x_{n-j})\right),
\end{eqnarray}
where the empty sum is meant to be $1$.
\end{rem}

The following two examples satisfy the requirements of our general model.
\begin{ex}[Cherry]\label{cherry}
Barab\'asi, Ravasz and Vicsek \cite{BRV} introduced the "cherry" model presented on Figures \ref{G12} and \ref{G3}: Let $V_1=\{1\}$ and $V_2=\{0,2 \}$, $E(G)=\left\{ (1,0), (1,2)\right\}$.
\end{ex}

\begin{ex}[Fan]\label{gkal} Our second example is  called "fan", and is defined on Figure \ref{19}. Note that here $|V_1|>1$.
\end{ex}

\begin{figure}
\centering
\subfigure[$G_1$ and $G_2$ with loops]{\label{G12}
\includegraphics[keepaspectratio,height=3cm]{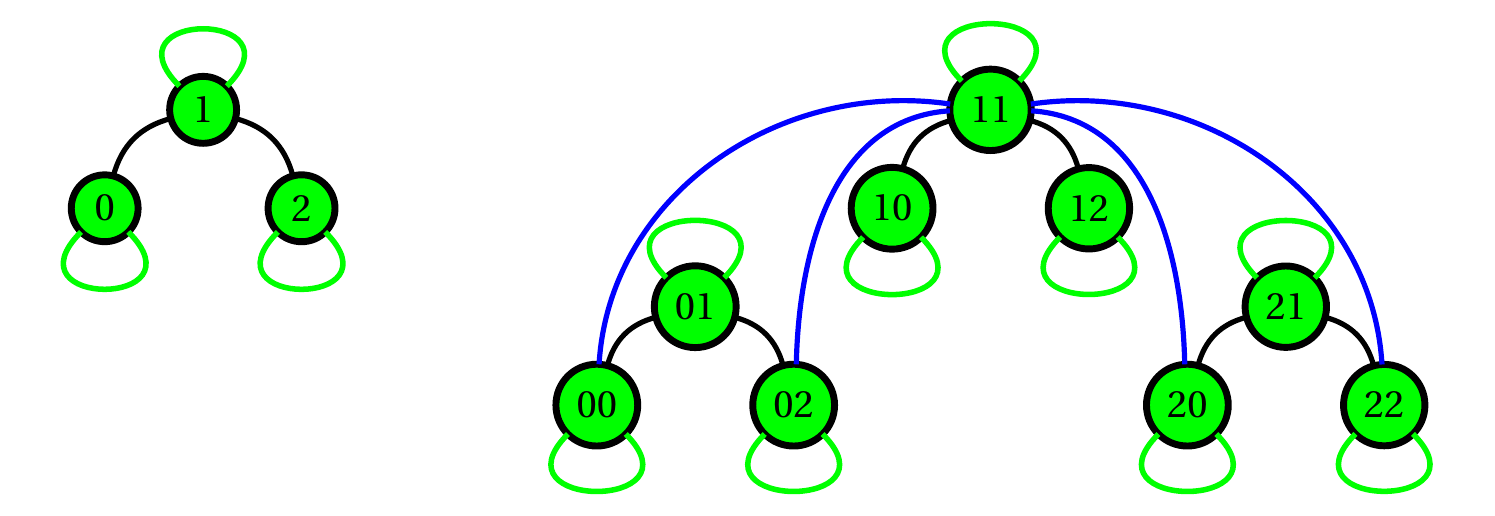}}
\subfigure[$G_3$]{\label{G3}
\includegraphics[keepaspectratio,width=12cm]{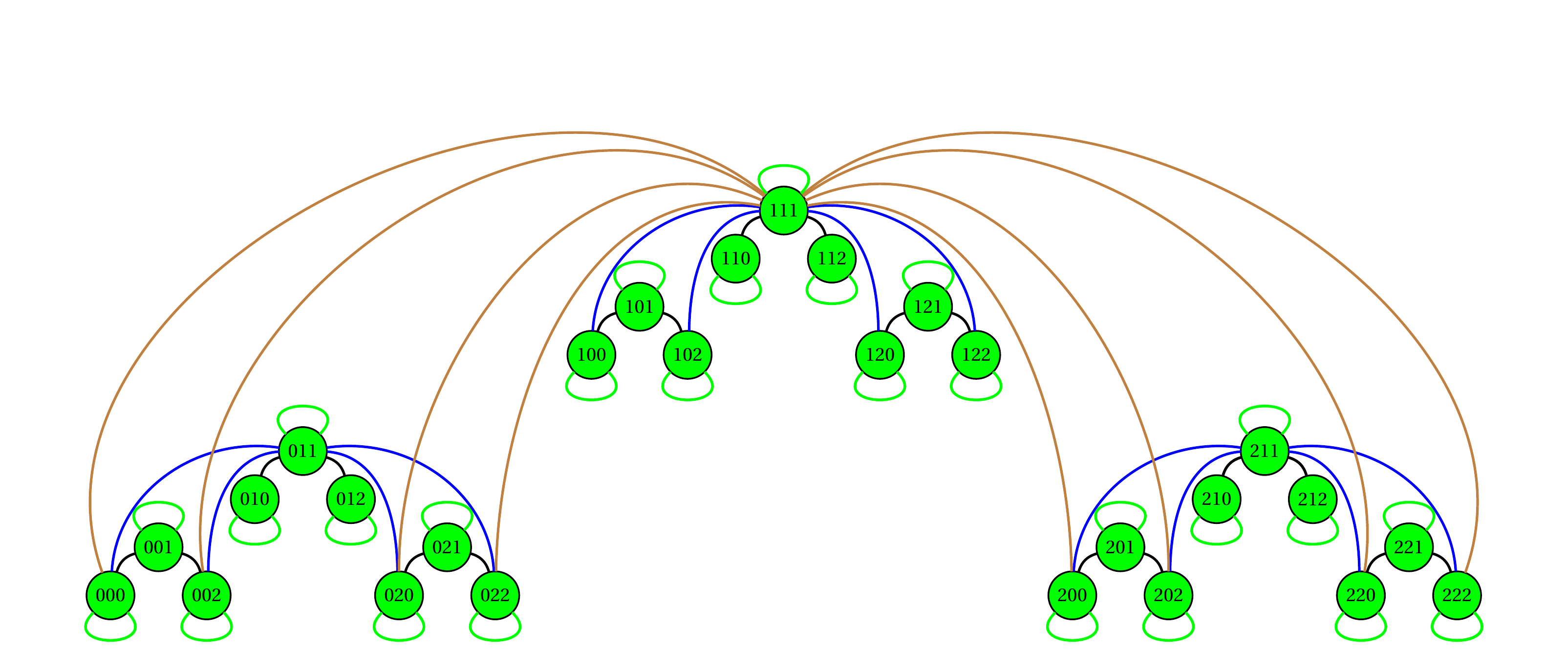}}

\subfigure[The sets $\Lambda _1,\Lambda _2,\Lambda _3$  ]{\label{La13}

\includegraphics[keepaspectratio,width=12cm]{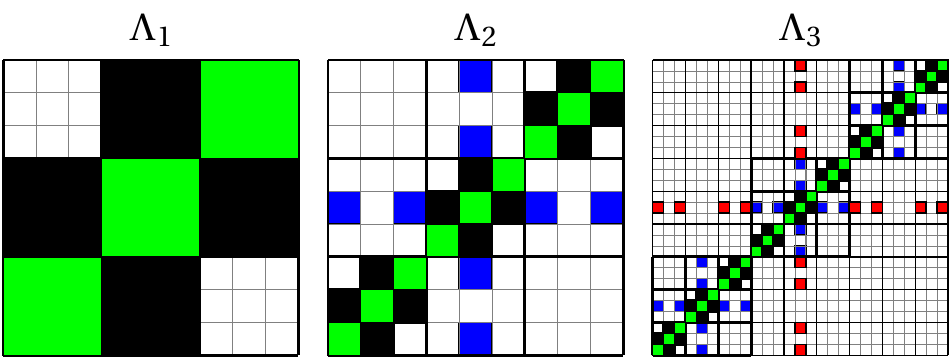}}

\caption{$G_1,G_2,G_3,\Lambda _1,\Lambda _2,\Lambda _3$  for the cherry Example \ref{cherry}}\label{21}

\end{figure}

\subsection{The embedding of the adjacency matrices into $[0,1]^2$}\label{sectionadjac}
In this Section, we investigate the  sequence of adjacency matrices corresponding to $\left\{G_n\right\}_{n\in \mathbb N}$. Roughly speaking, we will map them in the unit square, see Figure \ref{La13}.

To represent the adjacency matrix of $G_n$ as a subset of the unit square,  first partition $[0,1]^2$ into $N^{2n}$ congruent boxes, i.e. divide $[0,1]$ into equal subintervals of length $\frac{1}{N^n}$, corresponding to the first $n$ digits of the $N$-adic expansion of elements of $[0,1]$:
\be\label{15} I_{x_1\dots x_n}= \left[\sum_{r=1}^n \frac{x_r}{N^r}, \sum_{r=1}^n \frac{x_r}{N^r}+\frac{1}{N^n}\right], \forall (x_1 \dots x_n) \in \Sigma_n.\ee
We partition $[0,1]^2$ with the corresponding level-$n$ squares:
\be \label{Qxy} Q_{{\un x \choose \un y}}:=I_{\un x}\times I_{\un y}, \quad {\un x\choose \un y} \in \Sigma_n\times \Sigma_n.\ee
A natural embedding of the adjacency matrix of $G_n$ in the unit square is as follows:
\be\label{Lambdan}
\Lambda_n(a,b) := \left\{
                    \begin{array}{ll}
                      1, & \hbox{if $(a,b) \in Q_{{\un x \choose \un y}}, {\un x \choose \un y} \in E(G_n)$;} \\
                      0, & \hbox{otherwise.}
                    \end{array}
                  \right.
\ee
That is,
\[\Lambda_n(a,b) = \sum_{\un x, \un y \in \Sigma_n \atop{\un x \choose \un y } \in E(G_n)}  \ind_{ Q_{{\un x \choose \un y}}}(a,b). \]

We write $\Lambda_n$ for the support of the function $\Lambda_n(a,b)$, see Figure \ref{La13}. Observe that $\Lambda_n$ is a compact set and
$\Lambda_{n+1} \subset \Lambda_{n}$ holds for all $n$. So we can define the non-empty compact set
\be \label{Lambda} \Lambda:= \bigcap\limits_{n=1}^\infty \La_n. \ee
Clearly,
\[ \ind_\La(a,b)=\lim\limits_{n\to\infty}\La_n(a,b).\]

\begin{rem}

This representation obviously depends on the labeling of the graph $G$. For an arbitrary permutation $\pi$ of $\{0,\dots, N-1\}$, the corresponding representation of $G_n$ is denoted by $\Lambda^\pi_n(a,b)$. The relation between these two representations is given by the formula  \[\ba \Lambda^\pi_n(a,b)&=\Lambda_n(\varphi_{\pi^{-1}}(a),\varphi_{\pi^{-1}}(b)), \mbox{ and}\\
\ind_{\Lambda^\pi  }(a,b)&=\ind_{\Lambda }(\varphi_{\pi  ^{-1}}(a),\varphi _{\pi ^{-1}}(b)),\ea\]
where the measurable function $\varphi_\pi(x):[0,1]\to [0,1]$ is defined by
$$
\varphi_\pi \left(\sum_{i=1}^\infty \frac{x_i}{N^i}\right)=\sum_{i=1}^\infty \frac{\pi(x_i)}{N^i}.
 $$
\end{rem}

%\noindent {\bf Notation. }A complete directed graph on vertex set $V$ with $|V|=k$ is denoted by $\overrightarrow{K}_k(V)$.
%Similarly we write $\overrightarrow{B}_{k,l}(V,V')$ for the complete bipartite graph where we direct all the edges from $V$ to $V'$ and $|V|=k$ and $|V'|=l$.
%Let $V$ and $V'$ be disjoint set of vertices with cardinality $k$ and $\ell$, respectively. We define a directed bipartite graph $\overrightarrow{B}_{k,\ell}(V,V')$ with
%edge set $V\times V'$ such that all edges are directed from $V$ to $V'$.

%Furthermore, we will see that $\La_n$ is the $n$-th approximation of $\La$.

%The sequence $\Lambda_n$ form a convergent graph-directed self-similar set in $[0,1]^2$.
\subsection{Graph-directed structure of $\La$}%\label{graphdir}
Now we prove that  the limit $\La$ (defined in (\ref{Lambda})) can be considered as the attractor
of a not irreducible graph-directed self-similar iterated
 function system, (for the definition see \cite{Falconer}),
  with the directed graph $\mathcal G$ defined below.

\begin{df}%\label{GD}
The {\bf vertex set} $V(\mathcal G)$ is partitioned into three subsets:
\be\ba \label{dd1221} V_{dd}&=\left\{ {z\choose z}, z\in \Sigma_1\right\}\\
 V_{12}&=\left\{ {x\choose y}, x\in V_1, y \in V_2 \right\} \\
  V_{21}&=\left\{ {x\choose y}, x\in V_2, y \in V_1\right\}.
\ea
\ee
Then
\[ V(\mathcal G)=  V_{dd}\cup V_{12}\cup V_{21}.\]
The {\bf set of directed edges} $ E (\mathcal G)$ of $\mathcal G$ is as follows:
First we connect all vertices in both directions within each of the three sets $V_{dd}$, $V_{12}$ and $V_{21}$ (loops included). Then there is an outgoing edge for each vertex in $V_{dd}$ to all vertices in $V_{12}$ and $V_{21}$.
%\[ \mathcal E(\mathcal G)= \overrightarrow{K_N}(V_{dd}) \cup \overrightarrow{K_{|E|}}(V_{12}) \cup \overrightarrow{K}_{|E|}(V_{21}) \cup \overrightarrow{B}_{N,|E|}(V_{dd},V_{12})\cup \overrightarrow{B}_{N,|E|}(V_{dd},V_{21})\]

For every directed edge $e=(v_1,v_2)\in E(\mathcal G) $ we define a homothety:
\be\label{fe} f_e: \ Q_{v_2}\to Q_{v_1},\  f_e(a,b):= \frac1N(a,b) +\frac1N( x_1, y_1),  \ \mbox{with } v_i= { x_i \choose y_i},\ee
where $Q_{ v}:=Q_{ x\choose y }$ is the level-1 square for $ v={ x\choose y } \in V(\mathcal G)$.
\end{df}
The graph $\mathcal G$ corresponding to the graph sequence in the "cherry" example is given by Figure \ref{calG}.

\begin{figure}[ht!]
  % Requires \usepackage{graphicx}
\begin{center}
\includegraphics[keepaspectratio,height=9cm]{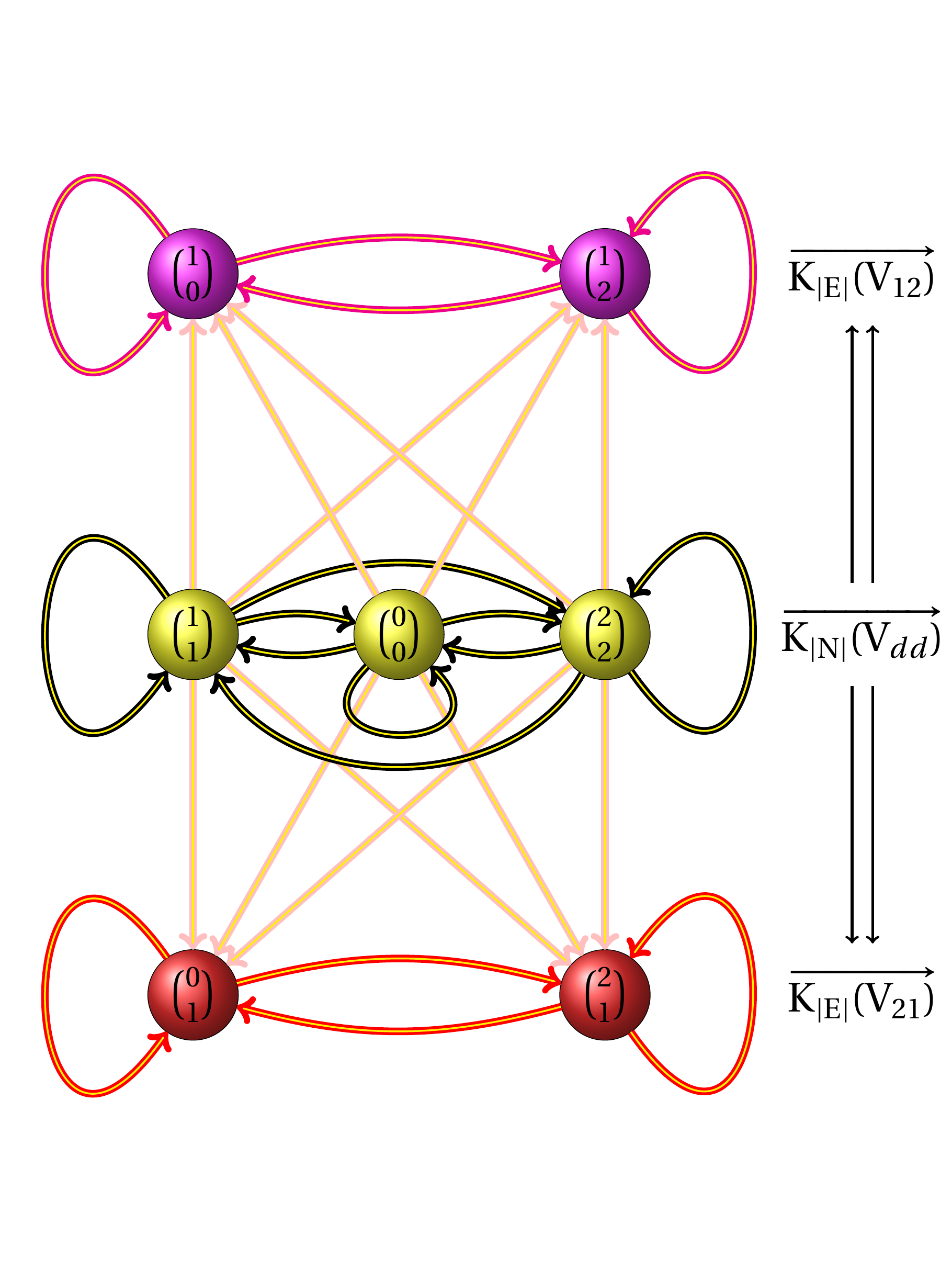}\\
  \caption{The graph $\mathcal{G}$ for the "cherry", Example \ref{cherry}.}\label{calG}
\end{center}
\end{figure}

In general, $\mathcal G$ is given by the schematic picture on the right hand side of Figure \ref{calG}, where the double arrow in between the complete directed graphs $\overrightarrow{K_{.}(V_{..})} $ illustrates that we connect all pairs of vertices in the given direction.

Let $\mathcal P_n$ be the set of all paths of length $n$ in $\mathcal G$, i.e.
\[ \mathcal P_n:= \left\{ \un v=(v_1\dots v_n)| \forall \ 1\le i <n \ (v_i, v_{i+1}) \in E(\mathcal G) \right\}. \]
For a  $\un v=(v_1\dots v_n)={x_1\dots x_n\choose y_1 \dots y_n } \in \mathcal  P_n$ it immediately follows from definitions (\ref{Qxy}) and (\ref{fe}) that
\be \label{Qiter} Q_{\un v}=  f_{\un v}\left([0,1]^2\right)= I_{ x_1\dots x_n}\times I_{y_1\dots y_n},\ee where
\be \ba \label{fv} f_{\un v}(.):&=f_{(v_1,v_2)}\circ \dots \circ f_{(v_{n-1},v_n)}(.) \text{ if } n\ge2,\\
 f_{v}(a,b):&=\frac1N (a,b)+\frac1N (x,y), \text{ if } n=1, \ v={x \choose y}.
 \ea
\ee
The key observation of connecting $\mathcal G$ to the graph sequence $G_n$ is the following:
\begin{cl}\label{ekvipath}For all $n$ we have
\[ \  E(G_n)=\mathcal P_n. \]
\end{cl}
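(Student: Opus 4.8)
The plan is to prove the two inclusions $\mathcal P_n\subseteq E(G_n)$ and $E(G_n)\subseteq\mathcal P_n$ by translating, in both directions, the combinatorial description of a path in $\mathcal G$ into the conditions (a)--(b) defining $E(G_n)$. The bridge is the identification of a vertex sequence $\un v=(v_1\dots v_n)$, $v_i={x_i\choose y_i}\in V(\mathcal G)$, with the pair ${\un x\choose \un y}={x_1\dots x_n\choose y_1\dots y_n}\in\Sigma_n\times\Sigma_n$. Under this identification $v_i\in V_{dd}$ means $x_i=y_i$, while $v_i\in V_{12}\cup V_{21}$ means exactly that ${x_i\choose y_i}\in E(G)$, with $x_i,y_i$ lying in opposite classes $V_1,V_2$.

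First I would isolate the structure of walks in $\mathcal G$. The only edges of $\mathcal G$ are those internal to each of $V_{dd},V_{12},V_{21}$ (all present, in both directions, loops included) together with the edges out of $V_{dd}$ into $V_{12}$ and $V_{21}$. Contracting each block to a node gives a quotient graph on $\{dd,12,21\}$ with edges $dd\to dd$, $dd\to 12$, $dd\to 21$, $12\to 12$ and $21\to 21$; in particular $V_{12}$ and $V_{21}$ are absorbing and mutually unreachable. Hence $\un v\in\mathcal P_n$ if and only if there is an index $0\le k\le n$ with $v_1,\dots,v_k\in V_{dd}$ and either $v_{k+1},\dots,v_n\in V_{12}$ or $v_{k+1},\dots,v_n\in V_{21}$; and, because each block is internally complete, no further restriction is placed on which vertices are chosen inside a block. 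The boundary cases $k=n$ (a walk staying on the diagonal) and $k=0$ (a walk entirely off-diagonal) are included.

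Then I would read off what this says about ${\un x\choose \un y}$. The diagonal prefix $v_1,\dots,v_k\in V_{dd}$ forces $x_i=y_i$ for $i\le k$, while $v_{k+1}\in V_{12}\cup V_{21}$ puts $x_{k+1},y_{k+1}$ in opposite classes, hence $x_{k+1}\ne y_{k+1}$; therefore $k=|\un x\wedge\un y|$ and the off-diagonal tail is exactly the pair of postfixes $\tilde{\un x},\tilde{\un y}$. If the tail lies in $V_{12}$ then $x_i\in V_1$ and $y_i\in V_2$ for all $i>k$, i.e. $\text{typ}(\tilde{\un x})=1$ and $\text{typ}(\tilde{\un y})=2$ (and symmetrically $V_{21}$ gives types $2$ and $1$), which is precisely condition (a); and $v_i\in V_{12}\cup V_{21}$ for every $i>|\un x\wedge\un y|$ is exactly ${x_i\choose y_i}\in E(G)$, which is condition (b). The all-diagonal case $k=n$ corresponds to $\un x=\un y$, i.e. the loops added to $E(G_n)$ by fiat. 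This yields $\mathcal P_n\subseteq E(G_n)$.

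For the reverse inclusion I would run the same dictionary backwards: given ${\un x\choose \un y}\in E(G_n)$ with $\un x\ne\un y$, set $k=|\un x\wedge\un y|$, so $v_i\in V_{dd}$ for $i\le k$; then (a) says the postfixes have opposite nonzero types and (b) says each ${x_i\choose y_i}$, $i>k$, is an edge of $G$, so all tail vertices lie in a single absorbing block, $V_{12}$ if $\text{typ}(\tilde{\un x})=1$ and $V_{21}$ if $\text{typ}(\tilde{\un x})=2$. Thus $\un v$ realizes an admissible region pattern and is a path, while the loops $\un x=\un y$ give the all-diagonal pattern. I expect the only genuinely delicate point to be the structural lemma on walks in $\mathcal G$ --- specifically, verifying that $V_{12}$ and $V_{21}$ are absorbing and can neither be re-entered from, nor exchanged with, one another, which is what pins down the common-prefix length $k$ and rules out a ``type $0$'' postfix; everything else is a direct unwinding of Definition \ref{typ} and the edge rule (\ref{edgerule}).
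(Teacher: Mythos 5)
Your proof is correct and follows essentially the same route as the paper's: both rest on the observation that a path in $\mathcal G$ must consist of a prefix in $V_{dd}$ followed by a tail lying entirely in one of the absorbing, internally complete blocks $V_{12}$ or $V_{21}$, which then translates digit by digit into conditions (a)--(b) of (\ref{edgerule}). The only difference is organizational: you isolate the block structure of walks as an explicit lemma and carry out the inclusion $\mathcal P_n\subseteq E(G_n)$ in full, whereas the paper details $E(G_n)\subseteq\mathcal P_n$ and dismisses the converse as ``very similar.''
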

\begin{proof}
 Let
  $\un v=(v_1\dots v_n)={a_1\dots a_n\choose b_1 \dots b_n }\in \Sigma _n\times \Sigma _n$, thus $\un a=(a_1\dots a_n)$ and $\un b=(b_1\dots b_n)$ are vertices in $G_n$. First we assume that $\un v\in E(G_n)$. Observe that by (\ref{edgerule}), ${ a_i\choose b_i}$ are vertices in $\mathcal G$. We would like to prove that the sequence
  \be \label{seq}  {a_1\choose b_1}\dots {a_n\choose b_n} \in \mathcal P_n.\ee

  If $k:=|\un a\wedge \un b|\ge 1$, then for
  $ i\leq k$, $a_i=b_i $ holds, thus the sequence of points ${a_1\choose b_1}\dots {a_k\choose b_k}$ forms a path in $\overrightarrow{K_{|N|}(V_{dd})}$. By (\ref{edgerule}), the pairs ${a_{k+1}\choose b_{k+1}},\dots, {a_n\choose b_n}$ are all edges in $G$ thus vertices in $\mathcal G$. Furthermore, either they all belong to $V_{12}$ or they are all contained in $V_{21}$, see (\ref{dd1221}). This implies that this postfix also forms a path in $\overrightarrow{K_{|N|}(V_{12})}$ or in $\overrightarrow{K_{|N|}(V_{21})}$. By definition of $E(\mathcal G)$, $\left({a_k\choose b_k}, {a_{k+1}\choose b_{k+1}}\right)$ is
 an edge in $\mathcal G$, so ${a_1\choose b_1}\dots {a_n\choose b_n}$ is a path in $\mathcal G$. If $k=0$ then the whole path is contained either in  $V_{12}$ or in $V_{21}$.
 This completes the proof of (\ref{seq}).

On the other hand, if $ {a_1\choose b_1}\dots {a_n\choose b_n}$ is a path of length $n$ in $\mathcal G$, then we claim that for $\un a=(a_1 \dots a_n), \un b=(b_1\dots b_n)\in V(G_n)$
\[ (\un a, \un b)\in E(G_n).\]The proof is very similar to the previous one. \end{proof}
In this way we can characterize $\La_n$ as follows:
\begin{cor}\label{ekvilan}
\[ \La_n= \bigcup_{\un v \in \mathcal P_n} Q_{\un v}=\bigcup_{\un v \in \mathcal P_n} f_{\un v}\left([0,1]^2\right) .\]
\end{cor}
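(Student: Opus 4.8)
The goal is to prove Corollary \ref{ekvilan}, which characterizes the $n$-th approximation $\La_n$ as the union of the level-$n$ cylinder squares $Q_{\un v}$ over all paths $\un v \in \mathcal P_n$ in the directed graph $\mathcal G$.

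The plan is to derive this directly from the two preceding results, so the proof should be essentially a bookkeeping argument rather than anything substantial. First I would recall from the definition of $\La_n$ (see (\ref{Lambdan}) and the paragraph defining the support) that $\La_n$ is precisely the union of those level-$n$ squares $Q_{{\un x \choose \un y}}$ for which ${\un x \choose \un y} \in E(G_n)$; that is,
\[
\La_n = \bigcup_{{\un x \choose \un y} \in E(G_n)} Q_{{\un x \choose \un y}}.
\]
This is just unwinding the indicator-function definition of the support.

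Next I would invoke Claim \ref{ekvipath}, which gives the identity $E(G_n) = \mathcal P_n$. Substituting this into the display above immediately rewrites the index set of the union, yielding $\La_n = \bigcup_{\un v \in \mathcal P_n} Q_{\un v}$, where I identify each edge ${\un x \choose \un y}$ with the corresponding path $\un v = (v_1 \dots v_n)$ via $v_i = {x_i \choose y_i}$, exactly as set up in the proof of Claim \ref{ekvipath}.

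Finally, the second equality $\bigcup_{\un v \in \mathcal P_n} Q_{\un v} = \bigcup_{\un v \in \mathcal P_n} f_{\un v}([0,1]^2)$ follows termwise from equation (\ref{Qiter}), which already records that $Q_{\un v} = f_{\un v}([0,1]^2) = I_{x_1 \dots x_n} \times I_{y_1 \dots y_n}$ for every path $\un v \in \mathcal P_n$. Since this holds for each individual square, it holds for the union. There is no real obstacle here: the content has been fully front-loaded into Claim \ref{ekvipath} (the genuine combinatorial work, matching the edge rule (\ref{edgerule}) against paths in $\mathcal G$) and into the geometric identity (\ref{Qiter}), so the only thing to watch is that the two notations for an edge versus a path are consistently matched, and that the support of $\La_n$ is correctly read off as the union of its constituent squares.
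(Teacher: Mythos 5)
Your proof is correct and follows exactly the paper's own route: the paper's proof of Corollary \ref{ekvilan} likewise cites the definition (\ref{Lambdan}), the identity (\ref{Qiter}), and Claim \ref{ekvipath}, and you have simply spelled out the same bookkeeping in more detail. Nothing is missing.
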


\begin{proof}
Immediately follows from (\ref{Lambdan}) and (\ref{Qiter}) and the assertion of the Claim \ref{ekvipath}.
\end{proof}
Let us define \[ \mathcal P_\infty:= \{ \un v=(v_1 v_2 \dots)| \forall i\in \mathbb N, \ (v_i, v_{i+1}) \in E(\mathcal G) \}.\]
Now for every $\un v\in \mathcal P_\infty$ we have $\bigcap\limits_{n=1}^\infty Q_{(v_1 \dots v_n)} $ is a point in $[0,1]^2$, which will be denoted by $\Pi_{\un v}$. That is,
\[ \Pi: \mathcal P_{\infty} \to [0,1]^2, \ \Pi(\un v):= \bigcap\limits_{n=1}^\infty Q_{(v_1 \dots v_n)}=\lim\limits_{n\to\infty} f_{v_1\dots v_n}(0,0).\]
It is an immediate consequence of Corollary \ref{ekvilan}, that
\be\label{lapi} \Pi(\mathcal P_{\infty})= \La, \text{ i.e. } \La= \bigcup_{\un v \in \mathcal P_\infty} \Pi_{\un v}.\ee
  This means that $\La_n$, the embedded adjacency matrix of $G_n$, can be considered as the $n$-th approximation of the fractal set $\La$.

In this way we coded the elements of $\La$ by the elements of $\mathcal P_\infty$. This coding is not $1-1$ for the same reason as the $N$-adic expansion is not $1-1$. However, if neither of the two coordinates of a point $(a, b) \in \La$ are $N$-adic rational numbers, then (a,b) has a unique code.

\subsection{Fractal geometric characterisation of $\La$.}%\label{frac}

For notational convenience we define the set of finite words above the alphabet $V_{dd}$ (including the empty word as well):
\[ V_{dd}^*:=\left\{\un v |\  \exists n\in \mathbb N\cup \left\{0\right\}, \un v = (v_1 \dots v_n)  \text{ and } v_i\in V_{dd}\right\}.\]

The three subgraphs $\overrightarrow{K_{|E|}(V_{12})}$, $\overrightarrow{K_{|E|}(V_{21})}$ and $\overrightarrow{K_{|E|}(V_{dd})}$ of $\mathcal G $ are complete directed graphs. We consider the three corresponding self-similar iterated function systems (IFS):
\[\ba \mathcal{F}_{dd}:&=\left\{f_{v}\right\}_{v\in V_{dd}},\\
\mathcal{F}_{12}:&=\left\{f_{v}\right\}_{v\in V_{12}}, \\ \mathcal{F}_{21}:&=\left\{f_{v}\right\}_{v\in V_{21}}, \ea \] where the functions $f_{v}, v\in V(\mathcal G)$ were defined in (\ref{fv}). The attractors of these IFS-s (see \cite[p.30]{Falconer}) are the unique nonempty compact sets satisfying
\be \ba\label{attraktor}
\La_{dd}&:= \bigcup_{v \in V_{dd}} f_{v}(\La_{dd})=\left\{ \Pi(\un v)| \un v=(v_1,v_2 \dots ) \text{ and }  v_i \in V_{dd} \right \}\\
\La_{12}&:= \bigcup_{v \in V_{12}} f_{v}(\La_{12})=\left\{ \Pi(\un v)| \un v=(v_1,v_2 \dots ) \text{ and }  v_i \in V_{12} \right \}\\
\La_{21}&:= \bigcup_{v \in V_{21}} f_{v}(\La_{21})=\left\{ \Pi(\un v)| \un v=(v_1,v_2 \dots ) \text{ and }  v_i \in V_{21} \right \}.
\ea
\ee
The Open Set Condition (see e.g. \cite[p.35]{Falconer}) holds for these IFS-s, so we can easily compute the Hausdorff-dimension of the attractors. Clearly, $\La_{dd}$ is the diagonal of the unit square.

Now we  prove that $\La$ is a countable union of homothetic copies of these attractors.
\begin{tm}\label{tet1}
\[ \La= \underbrace{\mathrm{ Diag }}_{\La_{dd}}\cup \bigcup_{\un v \in V_{dd}^*} \left(f_{\un v}(\La_{12})\cup f_{\un v}(\La_{21})\right),\]
where $\mathrm{ Diag }=\left\{ (x,x): x\in [0,1] \right\}$.
\end{tm}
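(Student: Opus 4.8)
The plan is to use the coding of $\La$ by infinite admissible paths established in (\ref{lapi}), namely $\La=\Pi(\mathcal P_\infty)$, together with the block structure of the directed graph $\mathcal G$. The decisive structural feature is that $\mathcal G$ is \emph{not irreducible}: within each of $V_{dd}$, $V_{12}$, $V_{21}$ all edges are present, but the only inter-block edges go from $V_{dd}$ into $V_{12}$ and into $V_{21}$. Consequently a vertex of $V_{12}$ can only be followed by a vertex of $V_{12}$ and a vertex of $V_{21}$ only by a vertex of $V_{21}$, so $V_{12}$ and $V_{21}$ are absorbing and do not communicate with each other or back to $V_{dd}$.

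First I would classify the infinite admissible paths. Let $\un v=(v_1 v_2\dots)\in\mathcal P_\infty$. By the observation above exactly one of the following holds: (i) $v_i\in V_{dd}$ for all $i$; (ii) there is a unique $k\ge 0$ with $v_1,\dots,v_k\in V_{dd}$ and $v_i\in V_{12}$ for all $i>k$; or (iii) the same with $V_{21}$ in place of $V_{12}$ (here $k$ is simply the last index at which the path is still in $V_{dd}$, and $k=0$ covers the case of an empty $V_{dd}$-prefix). In case (i) the point $\Pi(\un v)$ lies in $\La_{dd}=\mathrm{Diag}$ by the characterisation of $\La_{dd}$ in (\ref{attraktor}). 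In cases (ii) and (iii) write $\un w=(v_1\dots v_k)\in V_{dd}^*$ and $\un u=(v_{k+1}v_{k+2}\dots)$, so that $\un u$ is an infinite path inside $V_{12}$ (resp.\ $V_{21}$), whence $\Pi(\un u)\in\La_{12}$ (resp.\ $\La_{21}$) by (\ref{attraktor}).

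The second ingredient is the composition rule for the homotheties. From (\ref{fv}) one has $f_{v_1\dots v_n}=f_{\un w}\circ f_{v_{k+1}\dots v_n}$ for $n>k$, and letting $n\to\infty$ with $f_{\un w}$ continuous gives $\Pi(\un v)=f_{\un w}(\Pi(\un u))$. Thus case (ii) yields $\Pi(\un v)\in f_{\un w}(\La_{12})$ and case (iii) yields $\Pi(\un v)\in f_{\un w}(\La_{21})$, proving the inclusion $\La\subseteq\mathrm{Diag}\cup\bigcup_{\un v\in V_{dd}^*}\bigl(f_{\un v}(\La_{12})\cup f_{\un v}(\La_{21})\bigr)$. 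For the reverse inclusion I would run the argument backwards: a point of $\La_{dd}$ is $\Pi$ of a path in $V_{dd}$; a point of $f_{\un w}(\La_{12})$ with $\un w\in V_{dd}^*$ equals $f_{\un w}(\Pi(\un u))=\Pi(\un w\un u)$ for some infinite $V_{12}$-path $\un u$, and the concatenation $\un w\un u$ is admissible precisely because the junction $V_{dd}\to V_{12}$ is an edge of $\mathcal G$ (when $\un w$ is empty $f_{\un w}$ is the identity and this is just $\La_{12}\subset\La$); the $V_{21}$ case is identical. Hence every point of the right-hand side lies in $\Pi(\mathcal P_\infty)=\La$.

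The only genuine obstacle is the path classification, i.e.\ establishing that $V_{12}$ and $V_{21}$ are absorbing and mutually non-communicating; once this is in hand, everything else is the routine self-similarity bookkeeping of graph-directed IFS coding. One should also dispatch the degenerate boundary cases — the empty prefix $\un w$ and the $N$-adic ambiguity of $\Pi$ noted after (\ref{lapi}) — but these concern only boundary points and do not affect the set equality.
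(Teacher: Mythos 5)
Your proposal is correct and follows essentially the same route as the paper's own proof: both directions rest on the coding $\La=\Pi(\mathcal P_\infty)$ from (\ref{lapi}), the forward inclusion on classifying infinite admissible paths by their (possibly empty or infinite) $V_{dd}$-prefix and absorbed $V_{12}$/$V_{21}$-tail exactly as the paper's case analysis on $k=\max\{\ell: v_\ell\in V_{dd}\}$, and the reverse inclusion on concatenating a $V_{dd}^*$-word with an infinite $V_{12}$- or $V_{21}$-path. Your explicit justification of $\Pi(\un v)=f_{\un w}(\Pi(\un u))$ via the composition rule and continuity of $f_{\un w}$ is, if anything, slightly more careful than the paper's unexplained use of the same identity.
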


\begin{rem} Observe that $\La_{21}$ is the image of $\La_{12}$ by the reflection through the diagonal, hence $\La$ is symmetric to the diagonal. The same is true for the $n$-th approximation $\La_n$ of $\La$. This can be seen immediately by using the embedded adjacency matrix characterization of $\La_n$.
\end{rem}
\begin{proof}[Proof of Theorem \ref{tet1}]
We start by showing that
\be\label{subset}\La \subset \mathrm{ Diag }\cup \bigcup_{\un v \in V_{dd}^*} \left(f_{\un v}(\La_{12})\cup f_{\un v}(\La_{21})\right).\ee
Pick an arbitrary point  $(a,b)\in \La$. As a consequence of (\ref{lapi}) there exists a $\un v=(v_1 v_2 \dots)\in \mathcal P_\infty$ such that $\Pi(\un v)=(a,b)$. Let $k:=\max\left\{\ell:v_\ell\in \Lambda _{dd}\right\}$. We distinguish three cases: $k=0$, $k=\infty $ or $0<k<\infty$. Mind that for all $i\le k, v_i\in V_{dd}$ since once the path left the component $V_{dd}$, there is no way to return. Since $V_{12}$ and $V_{21}$ are closed, for $k< \infty$ all $v_i, i>k$  are in the same component $V_{12}$ or $V_{21}$.
\begin{description}
  \item[Case $k=0$] Clearly either all $v_i$ are in  $V_{12}$ or in $V_{21}$, so  $\Pi(\un v)\in \La_{12}\cup \La_{21}$.
  \item[Case $k=\infty $] For the same reason,
  $\Pi(\un v)=\lim\limits_{n\to\infty} f_{v_1\dots v_n}(0,0)\in \La_{dd}=\mathrm{ Diag }$. This is so because $f_{v_1\dots v_n}(0,0)$ is in the $\frac{1}{N^n}$ neighborhood of the diagonal $\left\{(x,x):x\in [0,1]\right\}$.
  \item[Case $0<k<\infty $] Let $\un v_k= (v_1 \dots v_k)$. For symmetry, without loss of generality we may assume that $v_{k+1}\in V_{12}$. As in the first case, we can see that for $\un w:= (v_{k+1}v_{k+2}\dots )$, $\Pi(\un w)\in \La_{12}$. Hence $\Pi(\un v)=f_{\un v_k}(\Pi_{\un w})\in f_{\un v_k}(\La_{12})$.
\end{description}
Now we have verified (\ref{subset}). To prove the opposite direction, that is
\be \label{supset}\La \supset\mathrm{ Diag }\cup \bigcup_{\un v \in V_{dd}^*} \left(f_{\un v}(\La_{12})\cup f_{\un v}(\La_{21})\right),\ee
we will use the symbolic representation of $\La$ given in (\ref{lapi}).

Pick an $x \in [0,1]$ and take the $N$-adic code $(x_1 x_2 \dots)$ of $x$. That is, $x=\sum\limits_{n=1}^{\infty}\frac{x_i}{N^i}$, $x_i \in \{0,\dots, N-1\}$. Then \[ \un v:=\Bigg(\underbrace{{x_1\choose x_1}}_{v_1},\underbrace{{x_2\choose x_2}}_{v_2},\dots\Bigg) \in \mathcal P_\infty, \]
it is easy to see that $\Pi(\un v)=(x,x)$. So by (\ref{lapi}), $(x,x) \in \La$.

Now we assume that $(a,b)\in \bigcup\limits_{\un v \in V_{dd}^*} \left(f_{\un v}(\La_{12})\cup f_{\un v}(\La_{21})\right)$. Without loss of generality we may further assume that $(a,b)\in f_{\un v}(\La_{12})$ for some $\un v \in V_{dd}^*$.  That is,
$ (a,b)=f_{\un v}(a',b')$ where $(a',b') \in \La_{12}$. By (\ref{attraktor}) there exists a
$\un w:=(w_1 w_2\dots ),\ w_i\in V_{12}$ such that $\Pi (\un w)=(a',b')$. In this way,
for the concatenation $\un t:=\un v\un w\in \mathcal{P}_\infty $ we have $(a,b)=\Pi (\un t)$ which implies $(a,b)\in \La$. This completes the proof of (\ref{supset}).
\end{proof}
\subsection{The same model without loops.}\label{selfloop}
Let $G'_n$ be the same graph as $G_n$ but without loops, i.e. $V(G'_n)= V(G_n)$ and $E(G'_n)\subset \Sigma_n\times \Sigma_n$:
\begin{eqnarray*}
 E(G'_n) = \Bigg\{ { \un x\choose \un y}&\Big|& \{ \text{typ}(\un {\tilde x}),\text{typ}(\tilde{\un y})\} =\{ 1,2\} \mbox{ and } \\
   &&\quad \forall |\un x\wedge \un y|<i\le n,\!{x_i\choose y_i}\!\in\!E(G)
\Bigg\}
\end{eqnarray*}
In this case $\La'_n= \La_n\!\setminus\!\text{Diag}_n  $, where $\text{Diag}_n$ is the union of the level $n$ squares that have nonempty intersection with the diagonal. The sequence $\La'_n$ is not a nested sequence of compact sets. However, it is easy to see that the characteristic function of $\La'_n$ tends to characteristic function of $\La\setminus\text{Diag}$. Further, $\La'_n$ tends to $\La$ in the Hausdorff metric, see \cite{Falconer}.

%We say that the equivalence class of $\La$ is the "limit" of the graph sequence $G_n$ as a labeled graph. This is similar to the limit \cite{LovSzeg}, but here the limit $W$ is of zero Lebesgue measure.

\section{Properties of the sequence $\left\{G_n\right\}$ and $\La$}\label{prop}
In this section we compute the degree distribution of $G_n$, and relate it to the Hausdorff dimension of $\La$. We also compute the length of the average shortest path in $G_n$. To get interesting result about the local clustering coefficient we need to modify our graph sequence $G_n$ in the line as it was done in \cite{AB}.

\subsection{Degree distribution of $\left\{ G_n\right\}$}\label{degdistr}
 Here we compute the degree distribution under the following regularity assumption on the base graph $G$:
\be \label{degassum} \tag{\textbf{A1}}\ba &\deg(x):=d_1, \quad \forall  x\in V_1 \\
&\max_{j \in V_2}\deg(y):=d_2 \le d_1-1, \quad \forall y\in V_2
\ea
\ee

\begin{figure}
\centering
\subfigure[$G$ on the left and $G_1$ on the right hand side. Here $V_1=\left\{2,4\right\}$ and $V_2=\left\{0,1,3,5\right\}$]{\label{14}
\includegraphics[keepaspectratio,height=3cm]{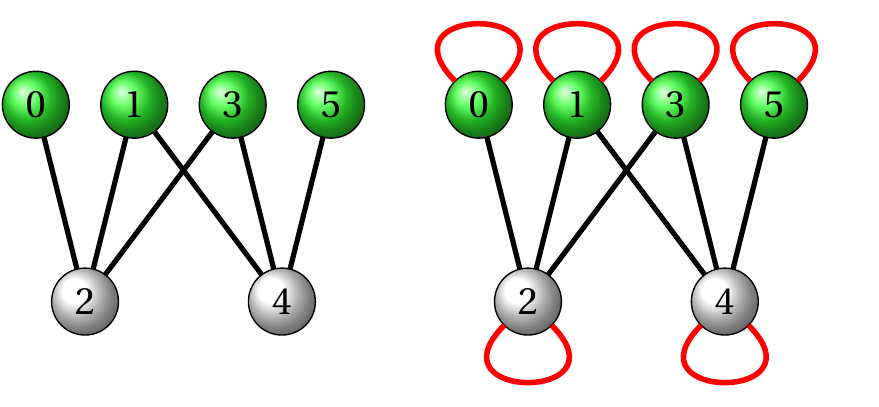}}

\subfigure[The graph $G_2$ (contains additionally all loops).]{\label{13}
\includegraphics[keepaspectratio,width=12cm]{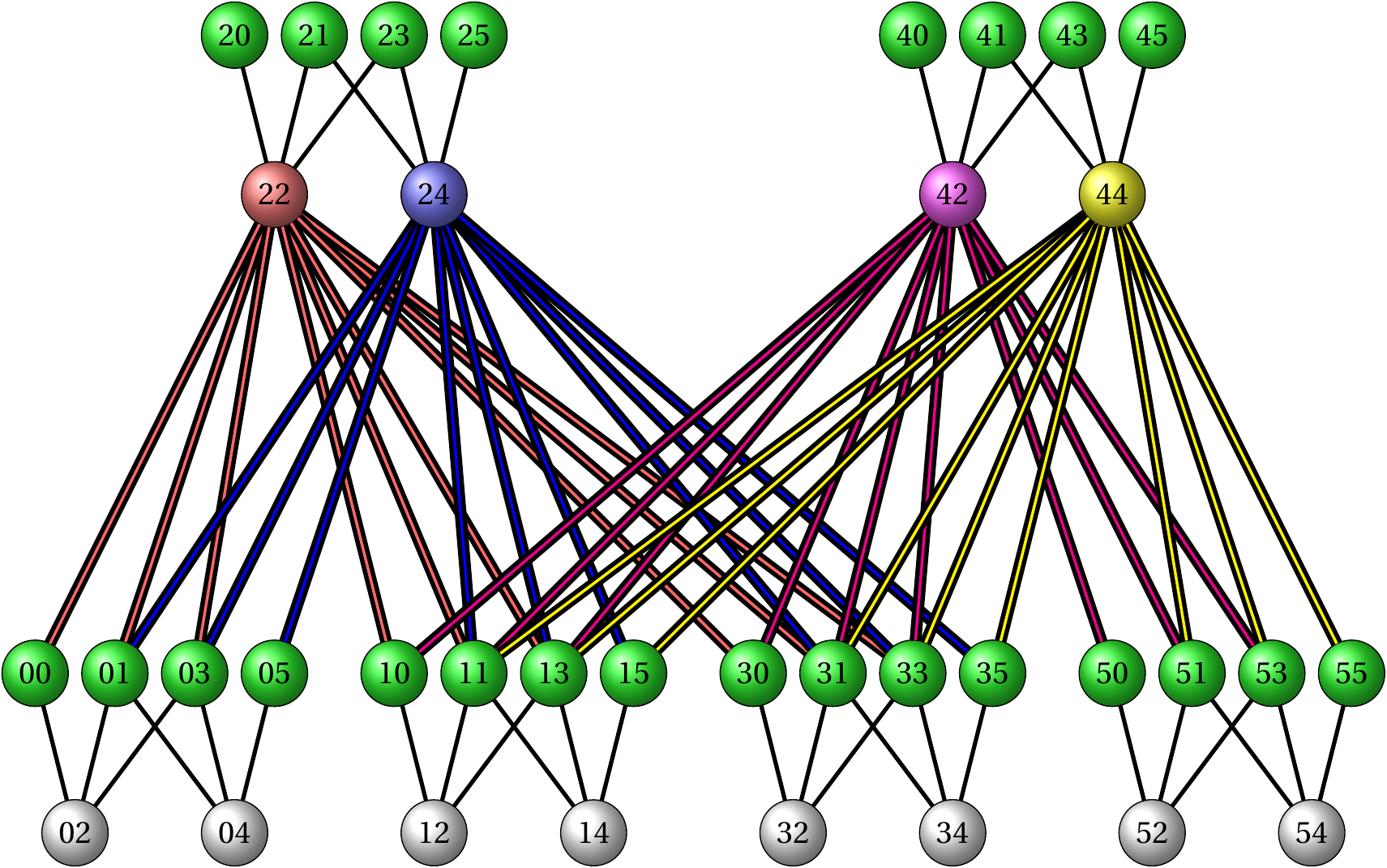}}
\caption{Example "fan".}\label{19}

\end{figure}

Recall that we defined $\ell(\un x)$ in (\ref{ell}) as the length of the longest block from backwards of the node $\un x$ such that the last $\ell(\un x)$ digits of $\un x$ belong to the same $V_i$. Put $\Sigma_n^i:=\left\{\un x\in \Sigma_n| x_n \in V_i\right\}, i=1,2$.
It follows from \ref{degassum} and Remark \ref{5} that the degree of a node  $\un x \in \Sigma_n^1$  is $\frac{d_ 1^{\ell(\un x)+1}-1}{d_1-1}+1$, and the number of such nodes with $\ell(\un x )=\ell$ is exactly $N^{n-\ell+1} \cdot n_2\cdot n_1^\ell.$

Under assumption \ref{degassum}, the decay of the degree distribution is determined by the set of high degree nodes denoted by
\[ HD_n:=\left\{\un x \in \Sigma_n^1| \deg_n(\un x ) > \max\limits_{\un y \in \Sigma_n^2} \deg_n(\un y) \right\}.\]
An equivalent characterisation of $HD_n$ is
\[ HD_n=\left\{\un x \in \Sigma_n^1| \ell(\un x)> \frac{1}{\log d_1}\max\left\{ (n+1) \log(d_2), \log n \right\}\right\}.\] This is so because  the degree of any $\un y\in \Sigma_n^2$ is at most $\max\{ d_2^{n+1},n\}$ .
The tail of the cumulative degree distribution is
\[  \Pv\left[\deg_n(\un X) > \frac{d_1^{\ell+1}-1}{d_1-1}+1\right]= \frac{n_1^{\ell+1} N^{n-\ell-1}}{N^n}= \left(\frac{n_1}{N}\right)^{\ell+1},\]
where $\un X$ is a uniformly chosen node of $G_n$. Mind that as long as $\ell<n$, this probability does not depend on $n$.
Writing $\widetilde F(t)=\Pv(\deg(\un X)> t)$ for the tail of the cumulative distribution function we get the power law decay
\[ \widetilde F(t)= t^{-\frac{\log(N/n_1)}{\log d_1}}\cdot c(d_1) \quad \text{ for }t= \frac{d_1^{\ell+1}-1}{d_1-1}.
 \]
So we have proved
\begin{tm}\label{16}
The degree distribution of the graph sequence $G_n$ satisfying assumption \ref{degassum}, has a power law decay with exponent
\be \label{expon}\tilde \gamma=\gamma-1= \frac{\log(N/n_1)}{\log d_1}.\ee
\end{tm}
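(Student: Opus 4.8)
The plan is to compute the tail of the empirical degree distribution of a uniformly chosen vertex $\un X\in\Sigma_n$ and to show that, read along the attainable degree values, it decays like a power of the degree with the stated exponent. The first step is to pin down the degrees of the high-degree vertices: by Remark \ref{5} together with assumption \ref{degassum}, a vertex $\un x\in\Sigma_n^1$ has $\deg(x_j)=d_1$ for each of the last $\ell(\un x)$ coordinates, so the geometric sum in (\ref{6}) collapses and
\be \label{plandeg} \deg_n(\un x)=\frac{d_1^{\ell(\un x)+1}-1}{d_1-1}+1. \ee
Thus the degree of a node in $\Sigma_n^1$ depends only on $\ell(\un x)$ and is strictly increasing in it.

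Next I would count. A vertex $\un x\in\Sigma_n^1$ satisfies $\ell(\un x)\ge\ell+1$ precisely when its last $\ell+1$ coordinates all lie in $V_1$ while the remaining $n-\ell-1$ coordinates are free; hence there are $n_1^{\ell+1}N^{n-\ell-1}$ such vertices and, for a uniform $\un X$,
\be \label{plantail} \Pv\!\left[\un X\in\Sigma_n^1,\ \ell(\un X)\ge\ell+1\right]=\left(\frac{n_1}{N}\right)^{\ell+1}, \ee
which is independent of $n$ as long as $\ell<n$. The step requiring genuine justification is that, for large $t$, the tail $\Pv[\deg_n(\un X)>t]$ is carried \emph{entirely} by these $\Sigma_n^1$ vertices. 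This is exactly where the hypothesis $d_2\le d_1-1$ is used: every $\un y\in\Sigma_n^2$ has degree at most $\max\{d_2^{n+1},n\}$, so once $t$ exceeds this bound --- equivalently, once $\ell$ passes the threshold defining $HD_n$ --- no $\Sigma_n^2$ vertex contributes and $\{\deg_n(\un X)>t\}$ coincides with the event in (\ref{plantail}).

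To convert (\ref{plantail}) into a power law in the degree, write $t=t_\ell:=\frac{d_1^{\ell+1}-1}{d_1-1}$ for the thresholds attainable in (\ref{plandeg}). The identity $d_1^{\ell+1}=(d_1-1)t_\ell+1$ gives $\ell+1=\log_{d_1}\!\big((d_1-1)t_\ell+1\big)$, and substituting into (\ref{plantail}) yields
\be \label{planpower} \widetilde F(t_\ell)=\left(\frac{n_1}{N}\right)^{\ell+1}=\big((d_1-1)t_\ell+1\big)^{-\frac{\log(N/n_1)}{\log d_1}}\sim c(d_1)\,t_\ell^{-\frac{\log(N/n_1)}{\log d_1}}, \ee
with $c(d_1)=(d_1-1)^{-\log(N/n_1)/\log d_1}$. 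This identifies the complementary-cumulative (tail) exponent as $\tilde\gamma=\log(N/n_1)/\log d_1$; the value $\gamma=\tilde\gamma+1$ in the statement is the standard passage from the tail exponent to the exponent of the degree frequency sequence.

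The main obstacle is not the algebra but the interpretation. The degree attains only the discrete, geometrically spaced values $t_\ell$, so $\widetilde F$ is a step function and ``power law decay'' must be understood as decay along the sequence $\{t_\ell\}_\ell$. Making this precise, and restricting to the range $\ell<n$ in which (\ref{plantail}) is truly $n$-independent (thereby excluding finite-size boundary effects as $\ell\to n$), is the delicate part; within the argument the single load-bearing inequality is $d_2\le d_1-1$, which guarantees that the far tail is produced solely by the $\Sigma_n^1$ vertices.
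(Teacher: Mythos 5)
Your proposal is correct and follows essentially the same route as the paper's own argument: the degree formula $\frac{d_1^{\ell+1}-1}{d_1-1}+1$ from Remark~\ref{5} under assumption (\ref{degassum}), the count giving the tail probability $\left(\frac{n_1}{N}\right)^{\ell+1}$, the use of $d_2\le d_1-1$ to ensure the far tail is carried only by $\Sigma_n^1$ vertices (the paper's set $HD_n$), and the conversion to a power law read along the discrete degree values $t_\ell$. Your write-up merely makes explicit what the paper leaves implicit (the constant $c(d_1)$, the threshold range $k_0(n)<\ell<n$, and the step-function interpretation of $\widetilde F$), but it is the same proof.
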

 This implies that the largest decay $\gamma$ we can get in this family of models is  $1+\frac{\log 3}{\log 2}$,  and the maximum is attained at $n_1=1$ and $d_1=2=n_2$. This is exactly the graph sequence in Example \ref{cherry}, see Figures \ref{G12} and \ref{G3}.
  We will later see that the case $n_1=1$ is important in another sense as well, see Section \ref{Hausdorffdim}.

\subsection{Hausdorff dimension of $\La$}\label{Hausdorffdim}
In Theorem \ref{tet1} we decomposed $\La$ into the diagonal of the square and countably many homothetic copies of $\La_{12}$ and $\La_{21}$, both attractors of self-similar IFS-s. Hence the Hausdorff dimension is the maximum of the dimension of the diagonal and $\dim_{\rm H} \La_{12}=\dim_{\rm H}\La_{21}$.
Note that the self-similar IFS $\mathcal F_{12}$ consists of $|E|$ similarities of contraction ratio $\frac1N$, and satisfies the Open Set Condition. As an immediate application of  \cite[Theorem 2.7]{Falconer}, the Hausdorff dimension of $\La_{12}$ and $\La_{21}$ is
$ \dim_{\rm H}\La_{12}= \frac{\log|E|}{\log N}$.

By this argument above we have proved the following theorem:
\begin{tm} %\label{tet2}
The Hausdorff dimension of $\La$ is
\[ \dim_{\rm H}\La= \max \left\{ \frac{\log|E|}{\log N}, 1\right\},
\]
furthermore,
\[ %\label{hdimnodiag}
\dim_{\rm H}\big(\La\!\setminus\!\mathrm{ Diag }\big) = \frac{\log|E|}{\log N}.
\]
\end{tm}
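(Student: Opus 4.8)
The plan is to read off both dimensions directly from the decomposition established in Theorem \ref{tet1},
\[ \La= \mathrm{Diag}\cup \bigcup_{\un v \in V_{dd}^*}\left(f_{\un v}(\La_{12})\cup f_{\un v}(\La_{21})\right), \]
by combining it with three standard properties of Hausdorff dimension (all in \cite{Falconer}): monotonicity, countable stability $\dim_{\rm H}\bigcup_i A_i=\sup_i\dim_{\rm H}A_i$, and invariance under similarities, so that $\dim_{\rm H}f_{\un v}(A)=\dim_{\rm H}A$ for every homothety $f_{\un v}$.

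First I would pin down the dimension of the building blocks. As already noted, $\mathcal F_{12}$ is a self-similar IFS of $|E|$ maps of ratio $1/N$ satisfying the Open Set Condition, so \cite[Theorem 2.7]{Falconer} gives $\dim_{\rm H}\La_{12}=\dim_{\rm H}\La_{21}=\frac{\log|E|}{\log N}$; by similarity invariance each homothetic copy $f_{\un v}(\La_{12})$ and $f_{\un v}(\La_{21})$ inherits this dimension. Since $V_{dd}^*$ is countable, the union over $\un v\in V_{dd}^*$ is a countable union of sets of dimension $\frac{\log|E|}{\log N}$, hence has exactly that dimension by countable stability. As $\mathrm{Diag}$ is a segment with $\dim_{\rm H}\mathrm{Diag}=1$, adjoining it and applying countable stability once more yields $\dim_{\rm H}\La=\max\{\frac{\log|E|}{\log N},\,1\}$, the first claim.

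For the second identity the only additional ingredient is that deleting the diagonal leaves the off-diagonal copies intact, i.e.\ that $f_{\un v}(\La_{12})\cap\mathrm{Diag}=\emptyset=f_{\un v}(\La_{21})\cap\mathrm{Diag}$ for every $\un v\in V_{dd}^*$; this is the step I would treat most carefully. A point of $\La_{12}$ is $\Pi(\un w)$ for some $\un w=(w_1w_2\dots)$ with every $w_i={x_i\choose y_i}$, $x_i\in V_1$, $y_i\in V_2$; because $V_1\cap V_2=\emptyset$ we have $x_1\neq y_1$, so the two coordinates of $\Pi(\un w)$ lie in distinct level-$1$ intervals and are unequal, whence $\La_{12}\cap\mathrm{Diag}=\emptyset$ (and symmetrically for $\La_{21}$). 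The homothety $f_{\un v}$ with $\un v\in V_{dd}^*$ maps $[0,1]^2$ onto the diagonal subsquare $Q_{\un v}$ (whose two side-intervals coincide since $\un v$ is a word over $V_{dd}$), sending the diagonal to $\mathrm{Diag}\cap Q_{\un v}$ and off-diagonal points to off-diagonal points; the disjointness is therefore preserved. Consequently
\[ \La\setminus\mathrm{Diag}=\bigcup_{\un v\in V_{dd}^*}\left(f_{\un v}(\La_{12})\cup f_{\un v}(\La_{21})\right), \]
and the computation of the previous paragraph gives $\dim_{\rm H}(\La\setminus\mathrm{Diag})=\frac{\log|E|}{\log N}$.

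I do not expect a genuinely hard analytic step; the argument is essentially bookkeeping on top of Theorem \ref{tet1} and the OSC dimension formula. The one place that truly needs the disjointness above is the exactness of the second identity: without it one would only get $\dim_{\rm H}(\La\setminus\mathrm{Diag})\le\dim_{\rm H}\La=\max\{\frac{\log|E|}{\log N},1\}$, which is strictly weaker whenever $\frac{\log|E|}{\log N}<1$ (as in the cherry example), since subtracting the dimension-one diagonal could a priori have lowered the value.
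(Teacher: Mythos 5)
Your overall strategy is exactly the paper's: read both dimensions off the decomposition of Theorem \ref{tet1}, using countable stability of Hausdorff dimension, invariance under the homotheties $f_{\un v}$, and the Open Set Condition computation $\dim_{\rm H}\La_{12}=\dim_{\rm H}\La_{21}=\frac{\log|E|}{\log N}$ from \cite[Theorem 2.7]{Falconer}. Your treatment of the first identity is correct and is precisely the paper's argument (the paper is in fact terser and never discusses the diagonal issue you raise at all).

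However, the step you single out as the one needing care is wrong as stated: $\La_{12}\cap\mathrm{Diag}=\emptyset$ can fail. Your justification---that the two coordinates of a point of $\La_{12}$ lie in distinct level-$1$ intervals and are therefore unequal---overlooks that the intervals $I_x$ of (\ref{15}) are closed, so adjacent ones share an endpoint. Concretely, let $N=4$, $V_1=\{1,3\}$, $V_2=\{0,2\}$, $E(G)=\bigl\{{1\choose 2},{3\choose 0}\bigr\}$, and $\un w=\bigl({1\choose 2},{3\choose 0},{3\choose 0},\dots\bigr)$: every $w_i$ lies in $V_{12}$, yet $\Pi(\un w)=\bigl(\tfrac14+\sum_{i\ge 2}3\cdot 4^{-i},\ \tfrac24\bigr)=(\tfrac12,\tfrac12)\in\mathrm{Diag}$, so your set identity $\La\setminus\mathrm{Diag}=\bigcup_{\un v}\bigl(f_{\un v}(\La_{12})\cup f_{\un v}(\La_{21})\bigr)$ is false for this base graph. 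The gap is easily repaired, because much less than disjointness suffices: a point of $\La_{12}\cap\mathrm{Diag}$ has equal coordinates carried by digit codes $(x_i)$, $(y_i)$ with $x_i\ne y_i$ for \emph{every} $i$, which forces (up to swapping the coordinates) $y_1=x_1+1$ and $x_i=N-1$, $y_i=0$ for all $i\ge 2$; hence $\La_{12}\cap\mathrm{Diag}$ is finite, and likewise each $f_{\un v}(\La_{12})\cap\mathrm{Diag}$ and $f_{\un v}(\La_{21})\cap\mathrm{Diag}$. Countable stability then gives $\dim_{\rm H}\bigl(\La_{12}\setminus\mathrm{Diag}\bigr)=\dim_{\rm H}\La_{12}$, and since $\La\setminus\mathrm{Diag}\supset\La_{12}\setminus\mathrm{Diag}$, the lower bound $\frac{\log|E|}{\log N}$ survives. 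Note also that your closing remark misplaces where disjointness matters: the sharp upper bound $\dim_{\rm H}(\La\setminus\mathrm{Diag})\le\frac{\log|E|}{\log N}$ follows from Theorem \ref{tet1} alone, via the inclusion $\La\setminus\mathrm{Diag}\subset\bigcup_{\un v}\bigl(f_{\un v}(\La_{12})\cup f_{\un v}(\La_{21})\bigr)$, with no disjointness needed; it is only the lower bound that requires knowing the diagonal intersection is dimension-zero (here: countable).
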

\begin{cor}
If $|V_1|=n_1=1$, then (\ref{degassum}) holds with $d_1=|E|$ in the bipartite $G$. Hence the degree distribution exponent (\ref{expon}) equals
\[\tilde\gamma= \frac{\log N}{\log |E|}=  \frac{1}{\dim_{\rm H}\big(\La\!\setminus\!\mathrm{ Diag }\big)}. \]
\end{cor}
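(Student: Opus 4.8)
The plan is to unwind the definitions: once $n_1=1$ is imposed, the quantities $d_1$, $d_2$ and $N/n_1$ appearing in (\ref{degassum}) and (\ref{expon}) can all be read off directly, and the claimed identity with the Hausdorff dimension is then just a substitution into the formula from the preceding theorem.

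First I would fix $V_1=\{v\}$, the single vertex of $V_1$. Since $G$ is bipartite with parts $V_1,V_2$, every edge of $G$ has exactly one endpoint in $V_1$; as $V_1$ contains only $v$, this forces every edge to be incident to $v$. Counting the edges at $v$ therefore gives $\deg(v)=|E|$, so the common degree on $V_1$ is $d_1=|E|$, which is the first assertion. The first line of (\ref{degassum}) holds trivially, since $V_1$ is a singleton and there is only one value to match.

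Next I would verify the remaining inequality in (\ref{degassum}). Each $y\in V_2$ can be joined to $v$ by at most one edge in the simple graph $G$, hence $\deg(y)\le 1$ for every $y\in V_2$, and so $d_2=\max_{y\in V_2}\deg(y)\le 1$. For any non-degenerate base graph we have $|E|\ge 2$, whence $d_2\le 1\le |E|-1=d_1-1$, which is exactly the inequality required. Thus (\ref{degassum}) holds with $d_1=|E|$.

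Finally I would substitute into the exponent formula. Putting $n_1=1$ and $d_1=|E|$ into (\ref{expon}) yields
\[ \tilde\gamma=\frac{\log(N/n_1)}{\log d_1}=\frac{\log N}{\log|E|}. \]
By the theorem immediately preceding this corollary, $\dim_{\rm H}\big(\La\setminus\mathrm{Diag}\big)=\frac{\log|E|}{\log N}$, and taking reciprocals gives $\tilde\gamma=1/\dim_{\rm H}\big(\La\setminus\mathrm{Diag}\big)$, as claimed. The argument is entirely routine; the only point demanding a moment's care is the verification of $d_2\le d_1-1$, which is precisely where the simplicity of $G$ (no multiple edges) and the exclusion of the trivial single-edge base graph enter.
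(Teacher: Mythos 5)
Your proposal is correct and follows essentially the same route the paper intends: the corollary is stated without proof precisely because, once $V_1=\{v\}$ forces every edge of the bipartite $G$ to be incident to $v$ (so $d_1=\deg(v)=|E|$ and $d_2\le 1\le d_1-1$), the claim is just substitution of $n_1=1$, $d_1=|E|$ into (\ref{expon}) combined with the preceding theorem's formula $\dim_{\rm H}(\La\setminus\mathrm{Diag})=\log|E|/\log N$. Your explicit remark that one needs $|E|\ge 2$ (implicit in the paper, since otherwise both \textbf{A1} and the exponent formula degenerate) is a careful touch, not a deviation.
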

\subsection{Average shortest path in $G_n$}%\label{ASHPath}

In many real networks, the typical distance between two randomly chosen points is of order $\log(|G |)$, the logarithm of the size of the network. We will see that our model also shares this property as well as the power law decay and the hierarchical structure, combining all these important features.

In this section we calculate the average length of shortest path between two nodes in $G_n$.
First we give a deterministic way to construct one of the shortest paths between any two nodes in the graph. To do so, we need to introduce some notation. Recall that the graph $G$ is a bipartite graph with partition $V_1$, $V_2$, see the beginning of Section \ref{detmodel}.
We remind the reader that for $\un x, \un y \in \Sigma_n$,  $\text{typ}(\un x)$, the common prefix $\un x\wedge \un y$ and the postfixes $\tilde {\un x}, \tilde{\un y} $ were defined in Definition \ref{typ}.

\begin{df}\label{blockdef} \

For two arbitrary vertices $\un x,\un y \in \Sigma_n$ we denote the length of their common prefix by  $k=k(\un x,\un y):= |\un x\wedge \un y|$.
Furthermore, let us decompose the postfixes $\tilde{ \un x}, \tilde{\un y}$  into blocks of digits of the same type:
\be \label{block}
\tilde {\un x }= \un b_1 \un b_2\dots \un b_r, \ \tilde {\un y}= \un c_1 \un c_2\dots \un c_q,
\ee
such that all of the blocks have a nonzero type and the consecutive blocks are of different types. That is, for $i=1, \dots, r-1, \ j= 1, \dots q-1$ we have
\[ \text{typ}(b_i)\neq \text{typ}(b_{i+1})\in \{1,2\},  \mbox{ and } \text{typ}(c_j)\neq \text{typ}(c_{j+1})\in \{1,2\}.\]
Note, that we denoted the number of blocks in $\tilde{\un x},\tilde{\un y}$ by $r$ and $q$, respectively.
 If $\un X$ and $\un Y$ are two random vertices of $G_n$, then the same notation as in (\ref{block}) is used with capital letters.
\end{df}
Now we fix an arbitrary self-map $p$ of $\Sigma_1$ such that
\[(x, p(x) ) \in E(G) \ \forall x \in G. \]
Most commonly, $p(p(x))\ne x$.
Note that $x$ and $p(x)$ have different types since $G$ is bipartite. For a word $\un z=(z_1 \dots z_m)$ with $\text{typ}(\un z )\in \{1,2\}$ we define $p(\un z):=(p(z_1) \dots p(z_m)).$. Then,
\be \label{pairedge}
(\un t\un z, \un t p(\un z)) \text{ is an edge in } G_{\ell+m}, \forall \un t=(t_1\dots t_\ell),
\ee
follows  from (\ref{edgerule}).

As usual we write $\text{Diam}(G)$ for the maximal graph-distance in the graph $G$ within components of $G$. Clearly $\text{Diam}(G)\le N-1$.
\begin{lm}\label{shortpathlemma}
Let $\un x, \un y$ be arbitrary vertices in the same connected component of $G_n$. Using the notation above,  the length of the shortest path between them is at least $r+q-1$ and at most $r+q+\text{Diam}(G)-2$.
\end{lm}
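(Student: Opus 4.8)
The plan is to read every edge of $G_n$ as an elementary ``suffix flip'' and then track one combinatorial potential. Concretely, by (\ref{edgerule}) a non-loop edge leaving a vertex $\un u$ is obtained by choosing an integer $s$ with $1\le s\le \ell(\un u)$ and replacing the last $s$ digits of $\un u$ (which by definition of $\ell(\un u)$ form a single-type block) by digits of the opposite type, each a $G$-neighbour of the corresponding old digit; the case $s=m$ is precisely (\ref{pairedge}). Call this ``flipping the last $s$ digits''. For the fixed pair $\un x,\un y$ with $k=|\un x\wedge\un y|$, define for any vertex $\un u$ the potential $\beta(\un u)$ to be the number of maximal type-blocks of the word $u_{k+1}\dots u_n$, so that $\beta(\un x)=r$ and $\beta(\un y)=q$. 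Two facts drive the argument. (i) A single flip changes $\beta$ by at most $1$, since it alters the type pattern of $u_{k+1}\dots u_n$ only at the single interface at the left end of the flipped block; if that interface lies at or before position $k$ then all of $u_{k+1}\dots u_n$ becomes one block and $\beta=1$. (ii) Any flip that changes the digit in position $k+1$ must flip a suffix of length $s\ge n-k$, hence forces $u_{k+1}\dots u_n$ to be a single block, i.e. $\beta=1$ immediately before (and after) that step.

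For the lower bound, since $x_{k+1}\neq y_{k+1}$ every path from $\un x$ to $\un y$ contains at least one step changing position $k+1$; fix the first such step, say from $\un z_{j-1}$ to $\un z_j$. By (ii) we have $\beta(\un z_{j-1})=\beta(\un z_j)=1$. By (i) the initial segment needs at least $r-1$ steps to bring $\beta$ from $r$ down to $1$, and the final segment at least $q-1$ steps to raise $\beta$ from $1$ up to $q$; adding the changing step itself yields length at least $(r-1)+1+(q-1)=r+q-1$. This argument is insensitive to whether the path ever disturbs the common prefix, because (i) and (ii) hold for every edge.

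For the upper bound I would exhibit an explicit path. Flipping the last block of $\un x$ repeatedly merges it into the previous block, so after $r-1$ flips $\un x$ becomes $\un w\,\un B_x$, where $\un w=\un x\wedge\un y$ and $\un B_x$ is a single-type word of length $n-k$; the first block $\un b_1$ is never touched, so $(\un B_x)_{k+1}=x_{k+1}$. Symmetrically $\un y$ reduces to $\un w\,\un B_y$ in $q-1$ flips with $(\un B_y)_{k+1}=y_{k+1}$. It then remains to join $\un w\,\un B_x$ to $\un w\,\un B_y$ by full-length flips (each keeping $\beta=1$): a sequence of $d$ such flips moves each coordinate $i\in\{k+1,\dots,n\}$ along a walk of length $d$ in $G$ from $(\un B_x)_i$ to $(\un B_y)_i$. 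Because $G$ is bipartite these pairwise distances share a single parity, so with $d=\max_i \mathrm{dist}_G\big((\un B_x)_i,(\un B_y)_i\big)\le \text{Diam}(G)$ and padding of the shorter walks all of them are realised simultaneously. Concatenating the three phases gives total length $(r-1)+d+(q-1)\le r+q+\text{Diam}(G)-2$; and since $\un B_x,\un B_y$ already differ at position $k+1$ we have $d\ge1$, which is consistent with the lower bound.

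The main obstacle is pinning down the additive constant in the lower bound: the naive ``descend to one block, then ascend'' count gives only $r+q-2$, and one must observe that the single unavoidable step actually switching the digit in position $k+1$ is a genuine extra step, sandwiched between two $\beta=1$ configurations, supplying the missing $+1$. A secondary point to check is that in the connecting phase the coordinates $(\un B_x)_i$ and $(\un B_y)_i$ lie in a common component of $G$: this follows by projecting any $G_n$-path from $\un x$ to $\un y$ onto coordinate $i$ and noting that the reductions only move coordinates along $G$-edges, so the distances above are finite and bounded by $\text{Diam}(G)$.
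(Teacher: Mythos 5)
Your proof is correct and takes essentially the same route as the paper: the lower bound rests on the same two observations (each edge changes the number of type-blocks after position $k$ by at most one, and any step altering position $k+1$ must occur at a single-block configuration, contributing the extra $+1$), and the upper bound uses the same three-phase path — reduce $\un x$ in $r-1$ flips, reduce $\un y$ in $q-1$ flips, then connect by full-suffix flips of length at most $\text{Diam}(G)$. Your parity/padding argument and the common-component check in the connecting phase simply make explicit what the paper dismisses as ``easy to see'' via (\ref{pairedge}).
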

Considering the worst case scenario, i.e. choosing all blocks of length $1$ yields:
\begin{cor}\label{diam}
The diameter of the graph $G_n$ is at most $2n+\text{Diam}(G)-2$. Since the size of the graph is $N^n$, therefore \[\text{Diam}(G_n)=\frac{2}{\log{N}} \log(|G_n|)+O(1).  \]
\end{cor}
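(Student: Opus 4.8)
The plan is to prove Lemma~\ref{shortpathlemma} first, and then read off the Corollary as the special case where every block has length one. For the Lemma, I would construct an explicit path from $\un x$ to $\un y$ and separately argue a matching lower bound. The key structural fact is the edge rule (\ref{edgerule}): to move along an edge, we must alter a suffix $\un z$ of nonzero type by replacing it with a word of the opposite type, and the natural choice is the paired word $p(\un z)$ via (\ref{pairedge}). Recall that $\un x=(\un x\wedge\un y)\tilde{\un x}$ with $\tilde{\un x}=\un b_1\un b_2\cdots\un b_r$, and symmetrically $\tilde{\un y}=\un c_1\cdots\un c_q$. The idea is that each step of a shortest path can only ``fix'' one block at a time, because an edge in $G_n$ requires the entire differing suffix to have a single nonzero type, and our blocks alternate types by construction.

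For the upper bound, I would build the path by successively eliminating blocks. Starting from $\un x$, I repeatedly apply moves of the form (\ref{pairedge}): first rewrite the last block $\un b_r$, then the next, working backwards, using the common prefix as the fixed part $\un t$ and the current active suffix as $\un z$. After roughly $r-1$ such moves one reduces $\tilde{\un x}$ to a single block, and after about $q-1$ further moves one reaches the corresponding single-block form of $\un y$; bridging the two remaining single blocks costs at most $\text{Diam}(G)$ extra steps, since within $G$ one must connect the two surviving digits, and $\text{Diam}(G)\le N-1$. Bookkeeping the moves gives the total $r+q+\text{Diam}(G)-2$. The off-by-a-constant adjustments (whether the meeting block is counted once or twice, and how the common prefix interacts) are the routine part that I would verify carefully but not belabor here.

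For the lower bound, the argument is that any edge of $G_n$ changes the ``block structure'' of a vertex relative to a fixed target by at most one unit. More precisely, I would define a potential equal to the number of alternating blocks in which the current vertex differs from the eventual endpoint, and show that traversing a single edge can decrease this potential by at most one. Since $\un x$ and $\un y$ differ in $r+q$ blocks (with a shared meeting point contributing the $-1$), any path has length at least $r+q-1$. The cleanest way to make ``decreases by at most one'' precise is to observe that an edge only rewrites a single maximal same-type suffix, so it can merge or resolve at most one block boundary at a time.

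The main obstacle I anticipate is the lower bound, specifically formalizing the claim that one edge resolves at most one block. The difficulty is that an edge in $G_n$ can simultaneously touch many coordinates (the entire differing suffix past $\un x\wedge\un y$), so naively it looks like a single move could change several blocks at once. The resolution is that although many coordinates change, the edge rule forces the changed suffix to have a \emph{single} nonzero type, meaning it acts within what is effectively one block-transition; I would need to track how the common prefix with the target and the alternating-type constraint interact to guarantee the potential drops by at most one. Once the Lemma is established, Corollary~\ref{diam} follows immediately: the worst case has all blocks of length $1$, so $r\le n$ and $q\le n$, giving $\text{Diam}(G_n)\le 2n+\text{Diam}(G)-2$, and since $|G_n|=N^n$ we have $n=\log(|G_n|)/\log N$, yielding the stated $\frac{2}{\log N}\log(|G_n|)+O(1)$.
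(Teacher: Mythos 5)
Your proposal follows essentially the same route as the paper: the explicit path built by repeatedly applying the pairing map $p$ to the current maximal same-type suffix (merging blocks one at a time), the lower bound via the observation that a single edge changes the number of alternating blocks by at most one, and then the Corollary read off from the worst case $r=q=n$ of all blocks having length $1$ together with $|G_n|=N^n$. The obstacle you flag for the lower bound is resolved exactly as you suggest, and as the paper does: an edge rewrites only one maximal uniform-type suffix, so the block count in (\ref{block}) moves by at most one per step.
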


\begin{proof}[Proof of Lemma \ref{shortpathlemma}]
First we construct a path $P(\un x, \un y)$ of  minimal length.
Starting from $\un x$ the first half of the path $P(\un x, \un y)$ is as follows:
\[
\ba %\label{xhat}
\hat {\un x}^0&= \un x = (\un x \wedge \un y) \un b_1 \dots \un b_{r-1} \un b_r\\
\hat {\un x}^1&= (\un x \wedge \un y) \un b_1 \dots \un b_{r-1}p(\un b_{r})\\
&\dots \\
\hat {\un x}^{r-1}&= (\un x \wedge \un y) \un b_1 p(\un b_2 \dots p(\un b_{r-1}p(\un b_{r}))),\\
\ea
\]
Starting from $\un y$ the first half of the path $P(\un x, \un y)$ is as follows:
\[
\ba \label{yhat}
\hat {\un y}^0&= \un y = (\un x \wedge \un y) \un c_1 \un c_2\dots  \un c_r\\
\hat {\un y}^1&= (\un x \wedge \un y) \un c_1 \dots \un c_{r-1}p(\un c_{r})\\
&\dots \\
\hat {\un y}^{q-1}&= (\un x \wedge \un y) \un c_1 p(\un c_2 \dots p(\un c_{r-1}p(\un c_{q}))).\\
\ea
\]
It follows from (\ref{pairedge}) that
\[\ba
P_x:&=(\hat {\un x}^0, \hat {\un x}^1, \dots,\hat {\un x}^{r-1})\\
P_y:&=(\hat {\un y}^{q-1}, \dotsm \hat {\un y}^1, \hat {\un y}^0)
\ea \]
are two paths in $G_n$.
To construct $P(\un x, \un y)$ the only thing remained is to connect $\hat {\un x}^{r-1}$ and $\hat {\un y}^{q-1}$. Using (\ref{pairedge}) it is easy to see that this can be done with a path $P_c$  of length at most $\text{Diam}(G)$. In this way,
\[
P(\un x, \un y ):= P_x P_c P_y. \]
Clearly,
\[ %\label{lengthP}
r+q-1\le\mathrm{Length}(P(\un x, \un y ))\le r+q+\text{Diam}(G)-2
\]
On the other hand, now we prove that no shorter paths exists than $P(\un x, \un y)$.
Recall that it follows from (\ref{edgerule}) that for any path $Q(\un x,\un y)= (\un x = \un q^0, \dots, \un q^\ell = \un y)$, the consecutive elements of the path only differ in their postfixes, which have different types. That is,
\[ \forall i, \un q^i= \un w^i \un z^i, \ \un q^{i+1}= \un w^i \tilde{\un z}^{i}, \text{ with } \text{typ}(\un z^i)\ne \text{typ}(\tilde{ \un z}^{i})\in \{1,2\}.
\]
This implies that in each step on the path, the number of blocks in (\ref{block}) changes by at most one.
Recall that $|\un x \wedge \un y|=k$, so $x_{k+1}\ne y_{k+1}$.
Since the digit on the $k+1$-th position changes on the path,  we have to reach a point where all the digits to the right from the $k$-th position are of the same type. Starting from $\tilde{\un p}^0=\un x$, to reach the first vertex $\un a$ of this property, we need at least $r-1$ steps on any path $\tilde P$, where $r$ was defined in formula (\ref{block}). Similarly, starting from $\un y$, we need at least $q-1$ steps to reach the first vertex $\un b $ where all the digits after the $k$-th position are of the same type. Because $x_{k+1}\ne y_{k+1}$, we need at least one more edge and at most $\text{Diam}(G)$ edges.  \end{proof}

 \begin{tm}\label{averst}
The expectation of the length of a shortest path between two uniformly chosen vertices $\un X, \un Y \in G_n$ can be bounded by
\[ \frac{4 n_1 n_2}{N^2} (n-1)<\Ev(|P(\un X, \un Y)|) <N+\frac{4 n_1 n_2}{N^2} (n-1). \]
\end{tm}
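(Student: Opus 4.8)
The plan is to express the expected shortest-path length in terms of the random quantities $r = r(\un X, \un Y)$ and $q = q(\un X, \un Y)$ from Definition \ref{blockdef}, and then to compute (or tightly bound) $\Ev(r+q)$. By Lemma \ref{shortpathlemma}, conditioned on $\un X, \un Y$ lying in the same component, the path length satisfies $r+q-1 \le |P(\un X, \un Y)| \le r+q+\text{Diam}(G)-2$. Since $\text{Diam}(G) \le N-1$, the additive discrepancy between the two bounds is at most $N-1$, which will account for the $O(N)$ gap between the claimed lower and upper bounds. So the whole problem reduces to showing that $\Ev(r+q) = \frac{4 n_1 n_2}{N^2}(n-1) + O(1)$, with the two one-sided bounds in the statement following by adding the $\pm$ correction terms. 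I must keep an eye on the conditioning: the bounds hold for $\un X, \un Y$ in the same component, so I would first argue that the contribution of disconnected pairs either does not arise in the relevant regime or is absorbed into the $O(1)$/$N$ slack.

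First I would compute $\Ev(r)$, the expected number of blocks in the postfix $\tilde{\un X}$. The key structural fact is that $r$ counts the number of \emph{maximal runs of equal type} in the postfix after the common prefix. Writing $k = |\un X \wedge \un Y|$, the postfix $\tilde{\un X}$ has length $n-k$, and $r$ equals one plus the number of positions $i$ (with $k < i < n$) at which the type of $X_i$ differs from the type of $X_{i+1}$ — where by ``type'' here I mean membership in $V_1$ versus $V_2$, ignoring the possibility that a digit could contribute a zero type (note that every single digit has type $1$ or $2$, so blocks in the postfix are genuinely runs of consecutive same-type digits). For two uniformly random words, each digit $X_i$ independently lies in $V_1$ with probability $n_1/N$ and in $V_2$ with probability $n_2/N$. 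Thus a ``type change'' between consecutive positions occurs with probability $2 \cdot \frac{n_1}{N} \cdot \frac{n_2}{N} = \frac{2 n_1 n_2}{N^2}$. By linearity of expectation over the roughly $n-k$ consecutive pairs in the postfix, I expect $\Ev(r) \approx 1 + \frac{2 n_1 n_2}{N^2}(n - k)$, and symmetrically for $\Ev(q)$, giving $\Ev(r+q) \approx 2 + \frac{4 n_1 n_2}{N^2}\Ev(n-k)$.

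The next step is to control $\Ev(k) = \Ev(|\un X \wedge \un Y|)$, the expected common-prefix length. Since $\un X, \un Y$ are independent uniform words, the event $\{k \ge j\}$ is exactly $\{X_1 = Y_1, \dots, X_j = Y_j\}$, which has probability $N^{-j}$; hence $\Ev(k) = \sum_{j \ge 1} N^{-j} = \frac{1}{N-1} = O(1)$. Therefore $\Ev(n-k) = n - O(1)$, and the leading term of $\Ev(r+q)$ is $\frac{4 n_1 n_2}{N^2}(n-1)$ up to an additive constant, as required. I would then carefully reconcile the ``$n-1$'' in the target bounds with the ``$n-k$'' and endpoint adjustments: the point is that the number of between-position pairs in the postfixes, summed appropriately, gives exactly $(n-1)$ factors once one is careful about whether the type-change probability applies on each of the $n-1$ adjacent coordinate pairs. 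In fact it may be cleanest to bypass the conditioning on $k$ altogether and directly count, for the whole words $\un X$ and $\un Y$, the expected number of type-changes, which by linearity over the $n-1$ adjacent pairs in each word gives precisely $\frac{2 n_1 n_2}{N^2}(n-1)$ per word and $\frac{4 n_1 n_2}{N^2}(n-1)$ for the pair.

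The main obstacle I anticipate is the bookkeeping around the endpoints and the common prefix: the block counts $r, q$ are defined relative to the postfix after the common prefix, not the full word, and one must ensure that the discrepancies between ``type changes in the full word'' and ``blocks in the postfix'' — together with the $-1$, $+$ constant, and $\text{Diam}(G)$ terms from Lemma \ref{shortpathlemma} — all collapse into the stated $O(1)$ and $N$ corrections without altering the leading coefficient. A secondary subtlety is that Lemma \ref{shortpathlemma} only applies to pairs in the same connected component, so I would either show the complementary event contributes negligibly or argue that the stated two-sided bound is loose enough (with its $+N$ slack on the upper side) to absorb any such pairs. Once these accounting issues are handled, the two inequalities follow by inserting the extremal values $r+q-1$ and $r+q+\text{Diam}(G)-2$ into $\Ev(\cdot)$ and using $0 \le \text{Diam}(G) \le N-1$.
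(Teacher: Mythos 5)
Your proposal follows essentially the same route as the paper's own proof: it invokes Lemma \ref{shortpathlemma} to reduce the problem to estimating $\Ev(R+Q)$, represents each block count as $1$ plus a sum of indicators of type changes at adjacent positions (each occurring with probability $\tfrac{2 n_1 n_2}{N^2}$), and controls the common prefix length via its (truncated) geometric distribution with $O(1)$ expectation. The bookkeeping you defer (reconciling $n-k-1$ with $n-1$ and the additive constants from the lemma) is exactly what the paper carries out when it computes $\Ev(R)=1+\bigl(n-\tfrac{1}{N-1}(1-N^{-n})-1\bigr)\tfrac{2 n_1 n_2}{N^2}$ and inserts the bounds $-1\le \Ev\bigl(|P(\un X,\un Y)|-(R+Q)\bigr)\le \text{Diam}(G)-2$.
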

\begin{cor}
The magnitude of the average length of a shortest path between two uniformly chosen vertices in $G_n$ is the logarithm of the size of $G_n$, which is the same order as $\text{Diam}(G_n)$.
\end{cor}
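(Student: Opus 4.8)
The plan is to use Lemma \ref{shortpathlemma} to reduce the computation of $\Ev(|P(\un X,\un Y)|)$ to the expected number of type-blocks in the postfixes, and then to evaluate that expectation in closed form. Write $r=r(\un X,\un Y)$ and $q=q(\un X,\un Y)$ for the (random) numbers of blocks in $\tilde{\un X},\tilde{\un Y}$ as in Definition \ref{blockdef}. Lemma \ref{shortpathlemma} gives the pointwise sandwich
\[ r+q-1 \le |P(\un X,\un Y)| \le r+q+\mathrm{Diam}(G)-2, \]
together with $\mathrm{Diam}(G)\le N-1$. Taking expectations reduces the whole statement to computing $\Ev(r+q)$, and since the construction is symmetric in $\un X$ and $\un Y$ we have $\Ev(r)=\Ev(q)$, so it suffices to find $\Ev(r)$.

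To compute $\Ev(r)$ I would condition on $k:=|\un X\wedge\un Y|$. As $\un X,\un Y$ are independent and uniform on $\Sigma_n$, $k$ is (truncated) geometric: $\Pv(k=j)=(N-1)/N^{j+1}$ for $0\le j<n$ and $\Pv(k=n)=N^{-n}$. The key observation is that, conditioned on $\{k=j\}$ with $j<n$, the postfix $(X_{j+1},\dots,X_n)$ is distributed as a string of $n-j$ i.i.d.\ uniform digits: the digits $X_{j+2},\dots,X_n$ are untouched by the conditioning, while $X_{j+1}$, conditioned only on $X_{j+1}\ne Y_{j+1}$, remains uniform on $\Sigma_1$ by symmetry. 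Every single digit has a well-defined type, and two consecutive independent uniform digits differ in type with probability $\vr:=2n_1n_2/N^2$. Hence $r$ is one plus a sum of $n-j-1$ type-change indicators, so $\Ev(r\mid k=j)=1+(n-j-1)\vr$. Summing the resulting geometric series gives
\[ \Ev(r)=(1-N^{-n})+\vr\Big[(n-1)-\tfrac{1-N^{1-n}}{N-1}\Big], \]
and therefore $\Ev(r+q)=2\vr(n-1)+C$ with $C:=2(1-N^{-n})-2\vr\,\tfrac{1-N^{1-n}}{N-1}$.

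It then remains to show $0<C<2$ sharply enough. The upper bound is immediate: the subtracted term is positive, so $C<2(1-N^{-n})<2$, whence $\Ev(|P|)\le\Ev(r+q)+N-3<2\vr(n-1)+N-1<2\vr(n-1)+N$. For the lower bound one verifies $C\ge1$: using $2\vr\le1$ (the maximum of $n_1n_2$ under $n_1+n_2=N$ is $N^2/4$), the inequality follows from $2(1-N^{-n})-\tfrac{1-N^{1-n}}{N-1}\ge1$, which after clearing denominators reads $(N-2)(1-N^{-n})\ge0$, true for $N\ge2$. This gives $\Ev(|P|)\ge\Ev(r+q)-1=2\vr(n-1)+(C-1)\ge2\vr(n-1)$, strict whenever $N\ge3$ (then $C>1$). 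In the single boundary case $N=2$ (where $n_1=n_2=1$, $2\vr=1$ and $C=1$ exactly) strictness is recovered by accounting for the diagonal pairs $\un X=\un Y$, which contribute $|P|=0>r+q-1=-1$ and improve the bound to $\Ev(|P|)\ge\Ev(r+q)-1+N^{-n}>2\vr(n-1)$.

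The main obstacle is not conceptual but bookkeeping: after the explicit evaluation of the geometric sum one must check that the additive constant $C$ lands strictly inside the window $(0,N)$ around the linear term $2\vr(n-1)$, and the genuinely delicate point is the strict lower bound, which is tight precisely in the extremal cherry-type case $N=2$ and is rescued only by the exponentially small contribution of coincident endpoints (or, when $\mathrm{Diam}(G)\ge2$, by the slack in the connecting segment $P_c$). A separate minor point is that $|P|$ is defined only for pairs in a common component, so the computation should be read either under the assumption that $G_n$ is connected (which holds when the base graph $G$ is connected) or after conditioning on $\un X,\un Y$ lying in the same component; in the connected case the distribution of $k$ and the block counts are unaffected.
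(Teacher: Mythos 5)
Your proposal is correct and takes essentially the same route as the paper's proof of Theorem \ref{averst}, from which the corollary follows: sandwich $|P(\un X,\un Y)|$ via Lemma \ref{shortpathlemma}, condition on the truncated-geometric prefix length $k$, represent the block counts $r,q$ as $1$ plus sums of type-change indicators each of probability $2n_1n_2/N^2$, and sum the geometric series to conclude $\Ev(|P(\un X,\un Y)|)=\frac{4n_1n_2}{N^2}(n-1)+O(1)=\Theta(n)=\Theta\left(\log|G_n|\right)$, the same order as $\mathrm{Diam}(G_n)$ by Corollary \ref{diam}. The only differences are points of extra care on your side (the atom $\un X=\un Y$, uniformity of the first postfix digit after conditioning, and strictness of the lower bound in the case $N=2$), which the paper glosses over.
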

\begin{proof}[Proof of Theorem \ref{averst}]
Let $\un X, \un Y$ be independent, uniformly chosen vertices of $G_n$. In this proof we use the notation introduced in Definitions \ref{typ} and \ref{blockdef}.  The digits of the code of a uniformly chosen vertex are independent and uniform in $\{ 0, \dots, N-1\}$, hence $K(\un X, \un Y):=|\un X \wedge \un Y|$ has a truncated geometric distribution with parameter $\frac{N-1}{N}$. That is
\[ \Pv(K(\un X, \un Y)=k)= \left\{ \ba &\left( \frac{1}{N}\right)^{k}\cdot \frac{N-1}{N}, \mbox{ if } 0\le k<n,\\&\left( \frac{1}{N}\right)^{n} \mbox{ if } k=n. \ea \right.  \]

Furthermore, given that the length of the prefix is $k=K(\un X, \un Y)$, the random variables $R$ and $Q$ (see Definition \ref{typ}) can be represented as the sum of indicators corresponding to the start of a new block:
\[ \ba R&= 1+\sum\limits_{i=1}^{n-k-1}\ind_{\text{typ}(X_{k+i})\ne \text{typ}(X_{k+i+1})},\\
 Q&= 1+\sum\limits_{i=1}^{n-k-1}\ind_{\text{typ}(Y_{k+i})\ne \text{typ}(Y_{k+i+1})}. \ea\]
%We introduced indicators counting the start of a possible new block at each coordinate.
%\[  R= 1+\sum\limits_{i=1}^{n-k-1}\ind_{\text{typ}(X_i)\ne \text{typ}(X_{i+1})},\
 %Q= 1+\sum\limits_{i=1}^{n-k-1}\ind_{\text{typ}(Y_i)\ne \text{typ}(Y_{i+1})}. \]
 Taking expectation yields
\[
\begin{split}
\Ev(Q|K(\un X, \un Y)=k)&=\Ev(R|K(\un X, \un Y)=k)\\
&=1+\Ev\left(\sum\limits_{i=1}^{n-k-1}\ind_{\text{typ}(
X_{k+i})\ne \text{typ}(X_{k+i+1})}\right)\\
&=1+\sum\limits_{i=1}^{n-k-1}\Pv(\text{typ}(X_{k+i})\ne \text{typ}(X_{k+i+1}))\\
&= 1+(n-k-1) \frac{2 n_1 n_2}{N^2}.
\end{split}
\]

So weighting this with the geometric weights of the length of the prefix, we get
\[
\begin{split}
\Ev(Q)=\Ev(R)&=\Ev\big( \Ev(R|K(\un X, \un Y))\big)\\
&=\Ev\big( 1+(n-K(\un X, \un Y)-1) \frac{2 n_1 n_2}{N^2}\big)\\
&= 1+ \left(n-\frac{1}{N-1}\left(1-\frac{1}{N^n}\right)-1\right) \frac{2 n_1 n_2}{N^2}.
\end{split}
\]
Using this and the following immediate consequence of  Lemma \ref{shortpathlemma}
\[%\label{9}
-1\le\Ev(|P(\un X,\un Y)|-(R+Q))\le \text{Diam}(G)-2, \]
finally we obtain that
\[
1-\frac{1}{N-1}+ \frac{4 n_1 n_2}{N^2} (n-1)\le \Ev(|P(\un X,\un Y)| < \text{Diam}(G) +\frac{4 n_1 n_2}{N^2} (n-1).\]
\end{proof}

\subsection{Decay of local clustering coefficient of the modified sequence $\Big\{\hat G_n\Big\}$ }\label{clust}
An important property of most real networks is the high degree of clustering.
In general, the local clustering coefficient of a node $v$ having $n_v$ neighbors is defined as
$$
C_v:=\frac{\#\{ \mbox{links between neighbors of }v\}}{{n_v \choose 2}}.
$$
Note that the numerator in the formula is the number of triangles containing $v$ and $C_v$ is the portion of the pairs of neighbors of $v$ which form a triangle with $v$ in the graph.

Observe that without the loops the graph sequence $G_n$ is bipartite, i.e. there are no triangles in the graph $G_n$. However, we can modify the graph sequence $G_n$ in a natural  way, like in \cite{AB},  to get a new sequence $\hat G_n$ preserving the hierarchical structure of $G_n$, still reflecting the dependence of clustering coefficient on node degree observed in several real networks. Namely, the local clustering coefficient of a vertex $v$ is of order $1/\deg(v)$.

\begin{figure}
\centering
\subfigure[We obtain $\widehat{G}$  by adding the dashed (red) edges to $G$.]{\label{17}
\includegraphics[keepaspectratio,height=3cm]{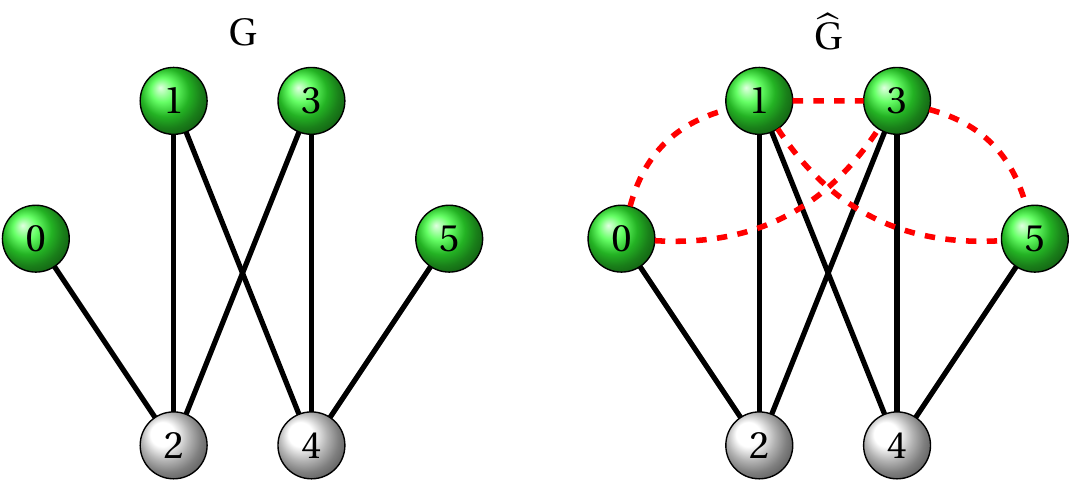}}

\subfigure[$\widehat{G}_2$: The edges of $\widehat G_2$ and $G_2$  differ only at the lowest hierarchical level (cf. Figure \ref{19})]{\label{18}
\includegraphics[keepaspectratio,width=12cm]{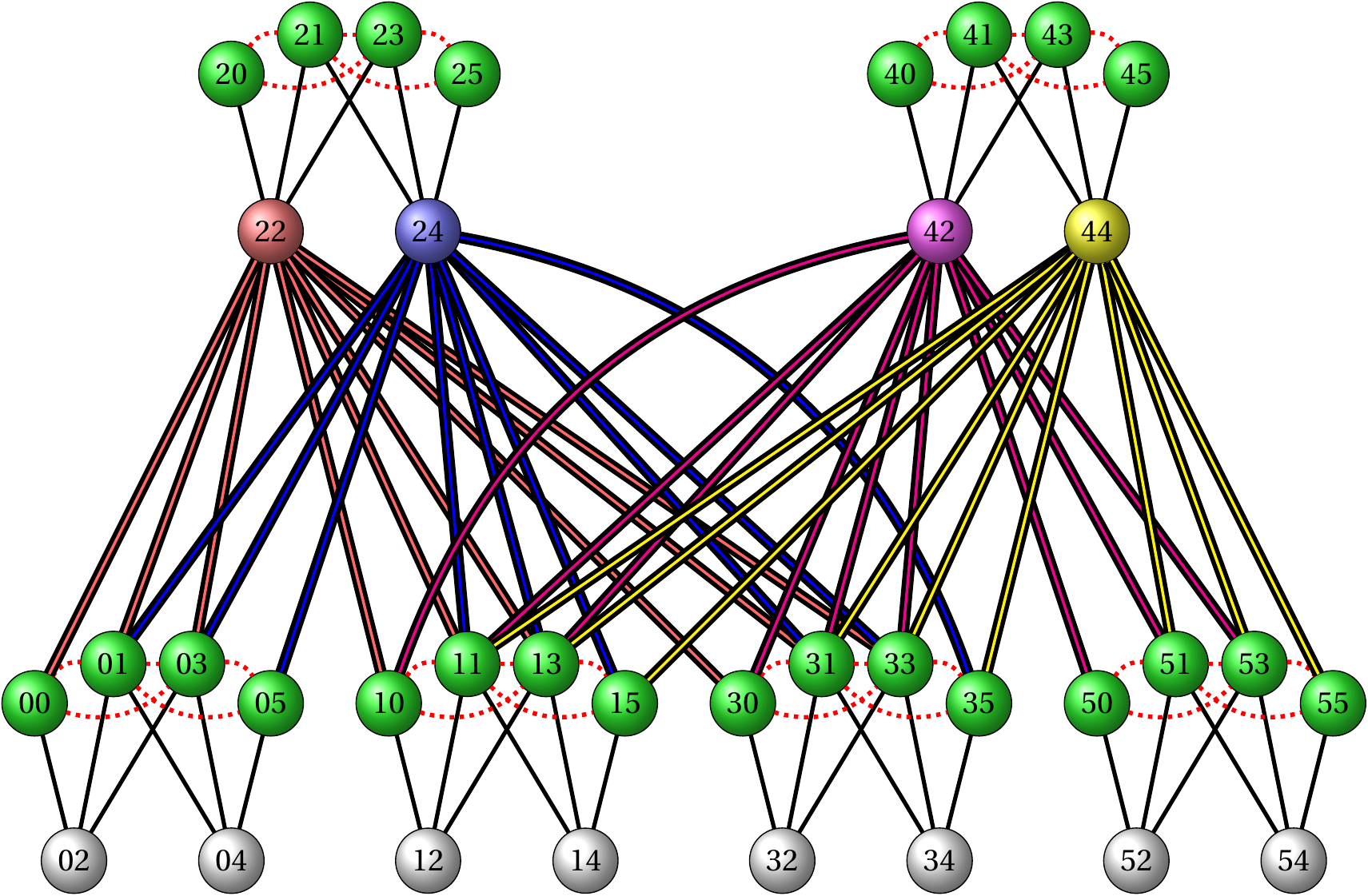}}
\caption{Clustering extended "fan". }\label{20}

\end{figure}

\begin{df}%\label{gkalap}
\begin{itemize}\
  \item We obtain the graph $\hat G$  adding a set of extra edges $RE(\hat G)$ to $G$ satisfying the following property:

    \textbf{Property R}

      $\forall x\in\Sigma _1,\ \exists y,z\in \Sigma _1$,
such that  two among the edges of the triangle $(x,y,z)_\Delta $ are contained in $E(G)$ and one of the edges is in $RE(\hat G) $.

\medskip
 So,
 $$
V(\hat G )=V(G) \mbox{ and } E(\hat G)=E(G)\cup RE(\hat G).
 $$
\noindent In the example presented on  Figure \ref{20}  the edges from $RE(\hat G)$ are the dashed red edges.
  \item Similarly we define the graph sequence $\left\{\hat G_n\right\}_{n=1}^{\infty }$ by deleting all loops in $G_n$ and adding extra edges to $G_n$. That is, the vertices $V(\hat G_n)=V(G_n)=\Sigma_n$, and with the definition of the simple graph $G'_n$ in Section \ref{selfloop}, the edge set is extended by the following rule
\begin{equation}\label{HEGN}
   E(\hat G_n)= E(G'_n)\bigcup RE(\hat G_n),
\end{equation}
    where
    \be\label{edgerulehat}
    RE(\hat G_n)= \left\{ {x_1 \dots x_n \choose y_1 \dots y_n}: x_i=y_i, i\le n-1, {x_n\choose y_n} \in RE(\hat G)\right\}.
    \ee
\end{itemize}
\end{df}
It  is clear from Property {\bf R} that
\be\label{cmin}
\hat C_{\min}:=\min\limits_{x\in \hat G} C_x>0.
\ee
Further, using (\ref{edgerule}) and  (\ref{edgerulehat}) one can easily see that
the degree of a vertex $\un x \in \hat G_n$ is
\be\label{deghat}
\widehat \deg_n(\un x)=S(\un x)\cdot \deg(x_n) + \left(\widehat \deg(x_n)-\deg(x_n)\right),
\ee
where $\widehat \deg(.)$ denotes the degree of a vertex in $\hat G$, while $\deg(.)$ stands for the degree in $G$.
\begin{rem}
The difference between the degree of any node $\un x \in \Sigma_n$ in $G_n$ and in $\hat G_n$ is bounded, thus the degree sequence of $\hat G_n$ has the same power law exponent as $G_n$.
\end{rem}
\begin{tm}\label{clustertetel}
There exists $K_1, K_2> 0$ such that the local clustering coefficient $C_{\un x}$ of an arbitrary node $\un x \in \hat G_n$ satisfies.
\[%\label{clustertet}
 \frac{K_1}{\widehat{\deg}_n(\un x)}\le C_{\un x}\le \frac{K_2}{\widehat{\deg}_n(\un x)}.
\]

\end{tm}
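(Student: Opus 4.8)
The plan is to reduce the theorem to a two-sided estimate on the number of triangles through a fixed vertex. Write $T(\un x)$ for the number of triangles of $\hat G_n$ containing $\un x=(x_1\dots x_n)$, so that $C_{\un x}=T(\un x)/\binom{\widehat\deg_n(\un x)}{2}$. Since $\widehat\deg_n(\un x)\ge\widehat\deg(x_n)\ge2$ by Property \textbf{R}, the binomial is $\asymp\widehat\deg_n(\un x)^2$; moreover (\ref{deghat}), with $d_{\max}:=\max_{x\in\Sigma_1}\deg(x)$, gives $S(\un x)\le\widehat\deg_n(\un x)\le d_{\max}S(\un x)+O(1)$, so that $\widehat\deg_n(\un x)\asymp S(\un x)$. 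Hence it suffices to prove $T(\un x)=\Theta(S(\un x))$ with constants depending only on $\hat G$; this yields $C_{\un x}=\Theta(1/\widehat\deg_n(\un x))$, i.e. the claimed bounds with suitable $K_1,K_2$.

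First I would classify all triangles through $\un x$. Because $G'_n$ is bipartite, a triangle must use an odd number of edges of $RE(\hat G_n)$. Every such extra edge joins two words agreeing in their first $n-1$ digits, and since an $RE(\hat G)$-edge joins two vertices of the \emph{same} type, a short parity/type check on the last digits shows that exactly two extra edges cannot occur. Triangles with three extra edges sit inside a single lowest-level copy, their last digits forming a triangle of $RE(\hat G)$; there are at most $\binom{\widehat\deg(x_n)-\deg(x_n)}{2}=O(1)$ of these. Thus $T(\un x)$ is, up to $O(1)$, the number of triangles using exactly one extra edge, in which $\un x$ is either the apex (both incident edges bipartite, the opposite edge extra) or an endpoint of the extra edge.

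The core of the argument is an exact count of these one-extra-edge triangles, which I would read off directly from the edge rule (\ref{edgerule}). In the apex case the extra edge has the form ${u_n\choose w_n}\in RE(\hat G)$ sitting over a common stem $\un s$ of length $n-1$; setting $m:=|\un x\wedge\un s|$, the two bipartite edges force ${x_i\choose s_i}\in E(G)$ for $m<i\le n-1$ together with ${x_n\choose u_n},{x_n\choose w_n}\in E(G)$, i.e. a base triangle at $x_n$ with $x_n$ as apex. The decisive observation is that the postfix-type requirement of (\ref{edgerule}) forces $x_{m+1},\dots,x_n$ to share one type, so $m\ge n-\ell(\un x)$; for each admissible $m$ the free stem digits $s_{m+1},\dots,s_{n-1}$ run over the $G$-neighbours of $x_{m+1},\dots,x_{n-1}$, contributing $\prod_{i=m+1}^{n-1}\deg(x_i)$ choices. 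Summing over $n-\ell(\un x)\le m\le n-1$ reproduces exactly $\sum_{r=0}^{\ell(\un x)-1}\prod_{j=1}^{r}\deg(x_{n-j})=S(\un x)$ (cf. (\ref{6})), weighted by the bounded number of admissible base triangles at $x_n$; the endpoint case is counted identically with $x_n$ in a base-vertex role and the bounded factor $\widehat\deg(x_n)-\deg(x_n)$ for the incident extra edge. This gives the upper bound $T(\un x)=O(S(\un x))$.

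For the matching lower bound I would invoke Property \textbf{R}, which guarantees that $x_n$ lies in at least one base triangle with two $E(G)$-edges and one $RE(\hat G)$-edge. If $x_n$ is the apex of this triangle, the apex construction above already produces $\ge S(\un x)$ triangles through $\un x$; if instead $x_n$ is incident to the $RE(\hat G)$-edge, then the third vertex of the base triangle is a common $G$-neighbour of the two endpoints, and the endpoint construction produces $\ge S(\un x)$ triangles. Either way $T(\un x)\ge S(\un x)$, so $T(\un x)=\Theta(S(\un x))$ and the theorem follows. I expect the main obstacle to be the bookkeeping in the exact count: verifying that the postfix-type constraints of (\ref{edgerule}) collapse precisely to $m\ge n-\ell(\un x)$, that the admissible last-digit configurations are exactly the base triangles of the appropriate apex/endpoint type, and that two-extra-edge triangles are genuinely excluded, so that $T(\un x)$ is pinned to a constant multiple of $S(\un x)$ rather than inflating by an unbounded factor.
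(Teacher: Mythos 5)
Your proposal follows essentially the same route as the paper's proof: you classify the triangles through $\un x$ by the number of $RE$-edges they use, count the one-extra-edge triangles exactly as $S(\un x)$ times a bounded base-level count (the paper's $\Delta_n^{1}(\un x)=S(\un x)\Delta^{1}(x_n)$ and $\Delta_n^{2}(\un x)=S(\un x)\Delta^{2}(x_n)$, split by the same apex/endpoint dichotomy), dismiss the remaining triangles as $O(1)$, and use Property \textbf{R} both for the lower bound and, via (\ref{deghat}), for $\widehat{\deg}_n(\un x)\asymp S(\un x)$. The paper packages the final step as an exact algebraic identity (the boundedness of the quantity $Q$ in its Claim) rather than a $\Theta$-computation, but that difference is cosmetic.

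One step of your argument is, however, not justified as stated: you exclude triangles with exactly two $RE(\hat G_n)$-edges on the grounds that ``an $RE(\hat G)$-edge joins two vertices of the same type.'' The definition of $RE(\hat G)$ only demands Property \textbf{R}; it does not forbid extra edges between vertices of different types, so such triangles can exist (take $(a_n,b_n),(a_n,c_n)\in RE(\hat G)$ with $(b_n,c_n)\in E(G)$, $b_n$ and $c_n$ of different types). This does not sink your proof, because any triangle using at least two $RE(\hat G_n)$-edges has all three vertices agreeing in their first $n-1$ digits, hence lives inside a single lowest-level copy of $\hat G$ and projects to a triangle of $\hat G$ at the last digit; there are therefore $O(1)$ of them, uniformly in $n$ and $\un x$. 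This confinement argument is exactly how the paper treats its ``irregular'' triangles, proving $\Delta_n^{\mathrm{ir}}(\un x)=\Delta^{\mathrm{ir}}(x_n)$. Replace your parity claim by it and your accounting, and hence the theorem, goes through unchanged.
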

\begin{proof}
We write  $\mathcal{T}_n(\un x)$ for the set of all triangles in $\hat G_n$ containing the node $\un x\in \Sigma _n$. We say that
 a triangle $(\un x, \un y, \un z)_\Delta\in \mathcal{T}_n(\un x)$ is regular if and only if exactly two of its edges are from $E(G_n)$. The triangle $(\un x, \un y, \un z)_\Delta  \in \mathcal{T}_n(\un x)$ is called irregular if it is not regular. The set of irregular triangles containing $\un x$ is denoted by $\mathcal{IRT}_n(\un x)$. We partition
the set of regular triangles $\mathcal{RT}_n(\un x)$ into the classes:
$$
\mathcal{RT}_n(\un x)=\mathcal{RT}_n^1(\un x)\cup \mathcal{RT}_n^2(\un x)
$$
in the following way: A triangle $(\un x,\un y,\un z)_\Delta\in \mathcal{RT}_n(\un x)$ belongs to $\mathcal{RT}_n^1(\un x)$ if and only if $\un x$ is NOT an endpoint of the edge contained in $RE(\hat G_n)$. That is
$$
\mathcal{RT}_n^1(\un x):=
\left\{(\un x,\un y,\un z)_\Delta \in \mathcal{RT}_n(\un x):
{ \un x \choose \un y },{ \un x \choose \un z }\in E(G_n).
\right\}
$$
Hence, $\mathcal{RT}_{n}^{2}(\un x)$ is the set of those $(\un x,\un y,\un z)_\Delta \in \mathcal{RT}_n(\un x)$ for which either ${ \un x \choose \un y }\in E(G _n)$ and ${ \un x \choose \un z }\in RE(\hat G _n)$ or vice versa.
Summarizing these partitions:
\[%\label{10}
\mathcal{T}_n(\un x)=\mathcal{RT}_n(\un x)\cup \mathcal{IRT}_n(\un x)=
\mathcal{RT}_n^1(\un x)\cup \mathcal{RT}_n^2(\un x)\cup\mathcal{IRT}_n(\un x)
\]
Now we define the cardinality of these classes:
$$
\Delta_n^1 (\un x):=\#\mathcal{RT}^1(\un x), \Delta_n^2 (\un x):=\#\mathcal{RT}^2(\un x)\mbox{ and } \Delta_n^{\mathrm{ir}} (\un x):=\#\mathcal{IRT}(\un x).
$$
When $n=1$ then we suppress the index $n$.   Observe that by Property {\bf R},
$$\Delta_n^\mathrm{r}(\un x):=\Delta_n^{1}(\un x)+\Delta_n^{2}(\un x)\geq 1, \quad \forall n\geq 1, \un x\in \Sigma _n.$$
Now we compute $\Delta _{n}^{i}(\un x)$, $i\in \{1,2, \mathrm{ir}\}$, for an arbitrary fixed $\un x\in \Sigma _n$. To do so the notation $\ell(\un x)$ will be used.
First we verify that
\begin{equation}\label{szamtyp1}
  \Delta _{n}^{1}(\un x)=
  \sum\limits_{r=0}^{\ell(\un x)-1}\prod_{j=1}^{r}\deg(x_{n-j})\cdot \Delta ^1(x_n)=
 S(\un x)\cdot \Delta ^1(x_n),
\end{equation}where $S(\un x)$ was defined in (\ref{6}).
To see this, observe that it follows from
(\ref{edgerule}), (\ref{HEGN}) and (\ref{edgerulehat}) that
\[%\label{1}
  (\un x,\un y,\un z)_\Delta \in \mathcal{RT}_{n}^{1}(\un x)
\]
holds if and only if all of the following three assertions are satisfied:
\noindent\begin{enumerate}
  \item $
 \exists 0\leq r\leq \ell(\un x)-1,\
 |\un y\wedge \un z|=n-1 \mbox{ and }
 |\un x\wedge \un y|=|\un x\wedge \un z|=n-r-1$
\item $ { x_k \choose y_k }\in E(G)\mbox{ whenever }n-r\leq k\leq n-1$
\item $  (x_n,y_n,z_n)_\Delta \in \mathcal{RT}^1(x_n).$
\end{enumerate}
Hence (\ref{szamtyp1}) is obtained by an immediate calculation.

Now we prove that
\begin{equation}\label{szamtyp2}
  \Delta _{n}^{2}(\un x)=
  \sum\limits_{r=0}^{\ell(\un x)-1}\prod_{j=1}^{r}\deg(x_{n-j})\cdot \Delta ^2(x_n)=
S(\un x)\cdot \Delta ^2(x_n).
\end{equation}
This is so because  by (\ref{edgerule}),  (\ref{HEGN}) and (\ref{edgerulehat}) we have
\[%\label{1a}
  (\un x,\un y,\un z)_\Delta \in \mathcal{RT}_{n}^{2}(\un x)
\]
holds if and only if all of the following three assertions are satisfied:
\begin{enumerate}
\item $ \exists 0\leq r\leq \ell(\un x)-1,\
 |\un x\wedge \un y|=n-1 \mbox{ and }
 |\un x\wedge \un z|=|\un y\wedge \un z|=n-r-1,$
\item $ { x_k \choose z_k }\in E(G)\mbox{ whenever }n-r\leq k\leq n-1$
\item $  (x_n,y_n,z_n)_\Delta \in \mathcal{RT}^2(x_n).$
\end{enumerate}
Hence, using the same argument as above we get (\ref{szamtyp2}).

Finally, we determine the number of irregular triangles containing $\un x$:
\begin{equation}\label{szamtyp3}
\Delta _{n}^{\mathrm{ir}}(\un x)=\Delta^{\mathrm{ir}} (x_n).
\end{equation}
This follows from the fact that
$$
 (\un x,\un y,\un z)_\Delta \in \mathcal{IRT}_{n}(\un x)
$$
is equivalent to
$$
\forall 1\leq i\leq n-1,\ x_i=y_i=z_i\mbox{ and } (x_n,y_n,z_n)_\Delta \in \mathcal{IRT}(x_n).
$$
We write $Z_\Delta (\un x)$ for the number of all triangles in $\hat G _n$ containing $\un x$:
$$
Z_\Delta (\un x):=\underbrace{\Delta _{n}^{1}(\un x)+\Delta _{n}^{2}(\un x)}_{\Delta ^{\mathrm{r}}(\un x)}+\Delta _{n}^{\mathrm{ir}}(\un x).
$$
Using (\ref{deghat}), (\ref{szamtyp1}), (\ref{szamtyp2}) and (\ref{szamtyp3}) we get
\be\label{zdelta}
C_{\un x}= \frac{Z_\Delta(\un x)}{{ \widehat{\deg_n} (\un x) \choose 2}} = \frac{2 \Delta^{\mathrm{r}}(x_n)\cdot
S(\un x)+ 2\Delta^{\mathrm{ir}}(x_n)}{\widehat{\deg_n}(\un x)(\widehat\deg_n(\un x)-1)},
\ee
where $S(\un x)$ was defined in (\ref{6}).
Now we estimate $C_{\un x}$.
\begin{cl}\
\begin{description}
  \item[(i)] If $\ell(\un x)=1$, then $C_{\un x}= C_{x_n}$.
  \item[(ii)] If $\ell(\un x )\ge 2$, then we have
\begin{equation}\label{estC}
  \left|C_{\un x} - \frac{2 \Delta^{\mathrm{r}}(x_n)}{\deg (x_n)}\cdot \frac{1}{\widehat \deg_n(\un x)}\right|\le \frac{\mathrm{const}}{\widehat\deg_n^2 (\un x)}.
\end{equation}
\end{description}
\end{cl}
\begin{proof}[Proof of the Claim]
Part \textbf{(i)} immediately follows from (\ref{edgerule}). To prove \textbf{(ii)} we fix an arbitrary $\un x\in \Sigma _n$ with $\ell(\un x)\geq 2$. Since $t,u,v$ introduced below depend only on $x_n$ there exists a constant $C_*$ independent of $n$ and $\un x$ such that
\begin{equation}\label{bound}
  0\leq t:=\frac{\Delta ^r(\un x)}{\deg(x_n)},\quad u:=\widehat{\deg}( x_n)-\deg(x_n),\quad v:=2\Delta ^{\mathrm{ir}}(x_n)<C_*.
\end{equation}
To prove (\ref{estC}) it is enough to verify that
\[%\label{11}
Q:=\left(\widehat{\deg_n}(\un x)\right)\left(\widehat{\deg_n}(\un x)-1\right)\cdot C_{\un x}-2t\cdot (\widehat{\deg_n}(\un x)-1)
\]
is bounded in $n$ and $\un x\in \Sigma _n$.
This so, because by  (\ref{deghat}) and (\ref{zdelta}) we have
\begin{eqnarray*}
% \nonumber to remove numbering (before each equation)
Q &=& 2\Delta ^{r}(x_n)\cdot S+v-2t\big(\underbrace{S\cdot \deg(x_n)+u}_{\widehat{\deg}(\un x)}-1 \big)\\
 &=& 2\Delta ^{r}(x_n)\cdot S+v- \underbrace{2\Delta ^{r}(x_n)\cdot S}_{2tS\deg(x_n)}-2t(u-1)\\
 &=&  v-  2t(u-1), \end{eqnarray*}
which is bounded by (\ref{bound}). \end{proof}
Property {\bf R} implies that both $C_{x_n}$ and $\frac{\Delta^{\mathrm{r}}(x_n)}{\deg(x_n)}$ are bounded away from zero. This completes the proof of the Theorem \ref{clustertetel}.
\end{proof}

The following theorem shows that the graph sequence $\hat G_n$ displays similar features to that of considered in \cite{AB}, namely, the average local clustering coefficient of the graphs $\hat G_n$ is not tending to zero with the size of $\hat G_n$.

\begin{tm}\label{clusteraver}
The average local clustering coefficient $\bar C(\hat G_n)$ of the graph $\hat G_n$ is bounded by two positive constants, more precisely
\be\label{claver}
\frac{2 n_1 n_2\hat C_{\min}}{N^2}\le \bar C(\hat G_n)\le \bar C(\hat G),
\ee
where $\hat C_{\min}$ was defined in (\ref{cmin}).
\end{tm}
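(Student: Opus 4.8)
The plan is to bound the average clustering coefficient $\bar C(\hat G_n) = \frac{1}{N^n}\sum_{\un x \in \Sigma_n} C_{\un x}$ from both sides. The upper bound and lower bound are handled by different mechanisms, so I will treat them separately.

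For the \emph{lower bound}, the idea is that every single vertex already contributes a positive amount. By Property \textbf{R} and the definition $\hat C_{\min} = \min_{x\in\hat G} C_x > 0$ from (\ref{cmin}), each base vertex has clustering coefficient at least $\hat C_{\min}$. The cleanest route is to restrict attention to vertices $\un x\in\Sigma_n$ with $\ell(\un x)=1$, since for these Claim~\textbf{(i)} in the proof of Theorem~\ref{clustertetel} gives the exact identity $C_{\un x}=C_{x_n}\ge \hat C_{\min}$. I would then count how many such vertices exist: a vertex has $\ell(\un x)=1$ precisely when $x_n$ has nonzero type but $x_{n-1}$ lies in the \emph{other} class from $x_n$ (so that the type-block of length one cannot be extended). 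The number of admissible $(x_{n-1},x_n)$ pairs is $2n_1 n_2$ (one ordering with $x_{n-1}\in V_1, x_n\in V_2$ and one with the roles swapped), while the first $n-2$ digits are free, giving $N^{n-2}\cdot 2n_1 n_2$ vertices. Dividing by $N^n$ yields the weight $\frac{2n_1 n_2}{N^2}$, and discarding the nonnegative contribution of all remaining vertices produces exactly the claimed lower bound $\frac{2n_1 n_2 \hat C_{\min}}{N^2}$.

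For the \emph{upper bound} $\bar C(\hat G_n)\le \bar C(\hat G)$, the strategy is to compare $\bar C(\hat G_n)$ with the average over the base graph $\hat G$. Using formula (\ref{zdelta}) together with the counts (\ref{szamtyp1})--(\ref{szamtyp3}), the clustering coefficient of $\un x$ depends on $\un x$ only through its last digit $x_n$ and through $S(\un x)$. I would group the sum over $\un x\in\Sigma_n$ according to the value of $x_n$ and observe that, within each such group, the quantity $C_{\un x}$ is a decreasing function of $S(\un x)$: as $S(\un x)$ grows the dominant $S$-term appears in both numerator (linearly, via $2\Delta^{\mathrm r}(x_n)S$) and denominator (quadratically, via $\widehat{\deg}_n(\un x)^2 \sim (\deg(x_n) S)^2$), so larger $S$ only decreases $C_{\un x}$. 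Since $S(\un x)\ge 1$ always (the empty sum is $1$), every term is bounded above by its value at $S=1$, which is precisely $C_{x_n}$, the clustering coefficient in the base graph $\hat G$. Averaging these over the $N^n$ vertices and noting that each digit value is equally represented reduces the average to $\bar C(\hat G)$.

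\textbf{The main obstacle} I anticipate is making the monotonicity argument for the upper bound fully rigorous: the map $S\mapsto \frac{2\Delta^{\mathrm r}(x_n)S + 2\Delta^{\mathrm{ir}}(x_n)}{(\deg(x_n)S + u)(\deg(x_n)S + u - 1)}$ (with $u = \widehat{\deg}(x_n)-\deg(x_n)$) must be shown to attain its maximum over integers $S\ge 1$ at $S=1$, and one has to verify that its value there equals $C_{x_n}$ exactly rather than merely being comparable to it. A subtlety is that the correspondence between the degree $\widehat{\deg}_n(\un x)$ in (\ref{deghat}) and the base-graph degree $\widehat{\deg}(x_n)$ must be tracked so that the $S=1$ specialization genuinely reproduces the base-graph quantity $C_{x_n}$; any boundary vertices where the block structure degenerates would need separate checking. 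Once this monotonicity and the exact $S=1$ identification are secured, both inequalities in (\ref{claver}) follow by term-by-term comparison and averaging.
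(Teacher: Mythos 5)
Your lower bound is correct and is exactly the paper's argument: you keep only the vertices with $\ell(\un x)=1$, i.e.\ those with $\mathrm{typ}(x_{n-1})\neq \mathrm{typ}(x_n)$, of which there are $2n_1n_2N^{n-2}$; on these $C_{\un x}=C_{x_n}\ge \hat C_{\min}$, and discarding the remaining (nonnegative) contributions gives $\frac{2n_1n_2\hat C_{\min}}{N^2}$.

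The upper bound, however, has a genuine gap, and it sits exactly at the step you flagged as the main obstacle. With $d:=\deg(x_n)$ (degree in $G$), $u:=\widehat\deg(x_n)-\deg(x_n)$, $a:=2\Delta^{\mathrm{r}}(x_n)$, $b:=2\Delta^{\mathrm{ir}}(x_n)$, formula (\ref{zdelta}) gives
\[
C_{\un x}=f\big(S(\un x)\big),\qquad f(S)=\frac{aS+b}{(dS+u)(dS+u-1)},
\]
and your plan needs $f(S)\le f(1)$ for all admissible $S\ge 1$. This is false in general: take $d=1$, $u=4$, $a=2$, $b=0$; then $f(1)=\frac{2}{20}=\frac{1}{10}$ while $f(2)=\frac{4}{30}=\frac{2}{15}>f(1)$. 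These parameters are realizable by a base pair $(G,RE(\hat G))$ satisfying Property \textbf{R}: let $x$ have a single $G$-edge $(x,y)$, let $(y,z)\in E(G)$ and $(x,z)\in RE(\hat G)$ (this is the unique regular triangle at $x$), and attach three further $RE$-edges from $x$ to vertices whose own Property-\textbf{R} triangles are disjoint from the neighborhood of $x$, so that no irregular triangle at $x$ arises. Then any $\un x$ with $x_n=x$, $x_{n-1}=x$ and $\ell(\un x)=2$ has $S(\un x)=2$ and $C_{\un x}=\frac{2}{15}>\frac{1}{10}=C_{x}$. So the per-vertex inequality $C_{\un x}\le C_{x_n}$ on which your whole upper bound rests is false for such base graphs, and no monotonicity proof can exist; note also that nothing in the hypotheses of the theorem (not even assumption (\ref{degassum}), which is not imposed here and in any case does not restrict $RE(\hat G)$) excludes this configuration.

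For comparison, the paper never claims monotonicity: it uses the cruder bounds $\Delta^{\mathrm{ir}}(x_n)\le \Delta^{\mathrm{ir}}(x_n)S(\un x)$ and $S(\un x)\binom{\deg(x_n)}{2}\le \binom{\widehat\deg_n(\un x)}{2}$ to conclude $C_{\un x}\le \big(\Delta^{\mathrm{r}}(x_n)+\Delta^{\mathrm{ir}}(x_n)\big)\big/\binom{\deg(x_n)}{2}$, with the $G$-degree rather than the $\hat G$-degree in the denominator, and then identifies this last quantity with $C_{x_n}$ (the underbrace in the proof of (\ref{clusterxn})). That identification is exact only when $u=0$, i.e.\ when $x_n$ carries no $RE$-edge; otherwise it strictly exceeds the true $C_{x_n}$. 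So the "subtlety" you isolated --- whether the $S=1$ specialization really reproduces the base-graph clustering coefficient --- is precisely the weak point, of your plan and of the paper's own argument alike; your proposed resolution (maximum of $f$ at $S=1$) does not hold, and closing the gap would require either restricting the admissible sets $RE(\hat G)$ (e.g.\ keeping $u$ small relative to $d$) or an averaging argument that does not proceed through a per-vertex comparison.
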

\begin{proof}

We will use the notation introduced in the proof of Theorem \ref{clustertetel}.
It easily follows from the proof of Theorem \ref{clustertetel} that
\be\label{clusterxn} C_{\un x} \le C_{x_n}. \ee
Namely, if
          $\ell(\un x)=1$ then by (\ref{edgerulehat}), $C_{\un x}=C_{x_n}$.
           If $\ell(\un x)\geq 2$ then $S(\un x)\geq 1$ thus using (\ref{zdelta}) we obtain
   \begin{eqnarray*}%\label{Cfelso}
   %  to remove numbering (before each equation)
     C_{\un x} &\leq &\underbrace{\frac{\Delta ^{\mathrm{r}}(x_n)+\Delta ^{\mathrm{ir}}( x_n)}{{\deg(x_n) \choose  2}}}_{C_{x_n}}
     \cdot
     \underbrace{{\deg(x_n) \choose  2}
     \cdot
     \frac{S(\un x)}{{\widehat{\deg_n}(\un x) \choose 2}}}_{\leq 1}.
       \\
\nonumber      &\leq & C_{x_n}.
   \end{eqnarray*}
This completes the proof of (\ref{clusterxn})  from which the upper estimate of (\ref{claver}) follows by averaging.
 On the  other  hand to see that the lower estimate holds we take into consideration only the contribution of $\un x \in \Sigma_n$ with $\ell(\un x )=1.$
$$
  \bar C(\hat G_n)> \frac{1}{N^n} \left(\sum_{z\in V_1}   N^{n-2} n_{ 2} C_z +\sum_{z\in V_2}   N^{n-2} n_{ 1} C_z\right)
  $$
Using $C_z> \hat C_{\min}$, the lower bound of (\ref{claver}) follows.
\end{proof}

\section{Definition of the randomized model}\label{rand}
In this section we randomize the deterministic model in Section \ref{detmodel} by using  $\La$ in $[0,1]^2$.
%%%%%%%%%%%%%%%
The random graph sequence $G_n^{\text{r}}$ is generated in a way which was
inspired by the $W$-random graphs introduced  by Lov\'asz and  Szegedy \cite{LovSzeg}. See also \cite{BJR}.

%%%%%%%%%%%%%%%%%%

Fix a deterministic model with a base graph $G$, $|V(G)|=N$. This determines $\Lambda(a,b)$ the limit of the sequence of scaled adjacency matrices, see the definition (\ref{Lambda})  and (\ref{Lambdan}) in Section \ref{sectionadjac}. Now for each $n$, we throw $M_n+1$ independent, uniform random numbers over $[0,1]$:
\[ X^{(1)},X^{(2)},\dots,X^{(M_n+1)} \sim U[0,1], \mbox{ i.i.d. } \]
We denote the $N$-adic expansion of each of these numbers by
\[ \un X^{(i)}=(X_1^{i},X^{i}_2,\dots), \quad\mbox{i.e. } X^{(i)}=\sum_{k=1}^{\infty}\frac{X^i_k}{N^k},\]
where the $X_k^i$-s are uniform over the set $\{0,1,\dots, N-1\}$. The n-th approximation of $X^{(i)}$ is
\[  X^{(i)}_{[n]}  =\sum_{k=1}^{n}\frac{X^i_k}{N^k}, \quad \un X^{(i)}_{n} = ( X_1^{i}, \dots, X^{i}_n) .\]

Now we construct the random graph $G^{\mathrm{r}}_n$ as follows: $|V(G^{\mathrm{r}}_n)|=\{1, \dots, M_n\}$, and $E(G^{\mathrm{r}}_n)$ is given by
\[E(G^{\mathrm{r}}_n)=\left\{ (i,j)\big|\  \text{int} \left( I_{X^{(i)}_{[n]}}\times I_{X^{(j)}_{[n]}} \right)\cap \Lambda \ne \emptyset \right\},\]
where $\text{int}$ denotes the interior of a set. Clearly,
\[E(G^{\mathrm{r}}_n)=\left\{ (i,j)| \La_n(X^{(i)},X^{(j)})=1 \right\}.\]
Note that
\[ \La_n(X^{(i)},X^{(j)})=1 \ \Leftrightarrow \ {X^i_1\dots X^i_n \choose X^j_1\dots X^j_n} \in E(G_n).    \]
Namely, we can think of the first $n$ digits $(X^i_1,\dots, X^i_n)$ and $(X^j_1,\dots, X^j_n) $ of the N-adic expansion of $X^{(i)}$ and $ X^{(j)}$ as vertices in $G_n$.  We draw an edge between the two vertices $i$ and $j$ in $G_n^\text{r}$ if  the vertices  $(X^i_1\dots X^i_n)$ and $(X^j_1\dots X^j_n)$ are connected by an edge in the deterministic model $G_n$. This gives the following probabilistic interpretation of the random model:
\begin{rem}\label{prob}
Consider the deterministic graph sequence $G_n$ with urns sitting at each vertex $v \in G_n$. Now throw $M_n+1$ balls independently and uniformly into the urns, and connect vertex $i$ to vertex $j$ by an edge in the random graph $G_n^\mathrm{r}$ if and only if the urns of ball $i$ and $j$ are connected by and edge in $G_n$.
\end{rem}

We need to introduce some further notation.

{\bf Frequently used definitions. }
Under assumption \ref{degassum},  for an $\un x\in G_n$ with $\ell(\un x)=k$ the degree of $\un x$ is
\[ t_k := \frac{d_1^{k+1}-1}{d_1-1}+1, \]
independently of the length of $n$.

In the random graph $G_n^{\text{r}}$, the conditional probability of the degree distribution of a random node $V\in \{ 0, \dots, M_n \}$ conditioned on the first $n$ digits of the N-adic expansion of the corresponding code $X^{(V)}$ follows a Binomial distribution:
\be \label{bin}\big(\deg(V)|(X^{V}_1 \dots X^V_n)=\un x  \big)\sim BIN \left(M_n, \frac{t_{\ell(\un x)}}{N^n}\right).\ee
This follows from the characterization of $G_{n}^{\mathrm{r}}$ described  in Remark \ref{prob}. Namely, assume  that the  $V$-th ball has landed in urn with label $\un x \in \Sigma_n$. In $G_n$ there are exactly $\deg_n(\un x)-1=t_{\ell(\un x)}$ vertices $\un y\in \Sigma _n$ that are connected to $\un x$. All the balls landing into urns corresponding to these vertices $\un y$ will be connected to $V$ in $G_n^{\text{r}}.$
\section{Properties of the randomized model}\label{proprand}
In this section we determine the proportion of isolated vertices and characterize the degree sequence.
\subsection{Isolated vertices}
\begin{tm}\label{izo}
If $M_n=c_n N^n$ with $\lim\limits_{n\to \infty}c_n=\infty$, then the fraction of isolated vertices tends to zero as $n\to \infty$. More precisely, for a uniformly chosen node $V \in G_n^\text{r}$,
\[ \Pv\left(\deg(V)=0\right)\le e^{- d_{\min} c_n},\]
where $d_{\min}$ stands for the minimal degree in the base graph $G$, and in $\deg(.)$ we do not count the loops.
\end{tm}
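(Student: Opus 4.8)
The plan is to condition on the urn into which the distinguished ball $V$ falls and then exploit the fact that \emph{every} vertex of $G_n$ has at least $d_{\min}$ genuine (non-loop) neighbours. Using the balls-into-urns description of Remark~\ref{prob}, I fix the urn $\un x\in\Sigma_n$ receiving ball $V$; by the symmetry of the construction $\un x$ is uniform on $\Sigma_n$, and the remaining $M_n$ balls are thrown independently and uniformly, each hitting a prescribed urn with probability $1/N^n$. The vertex $V$ is isolated in $G_n^{\mathrm r}$ precisely when none of these $M_n$ balls lands in an urn joined to $\un x$ by a (non-loop) edge of $G_n$. Since the balls are independent, this conditional probability equals $(1-t_{\ell(\un x)}/N^n)^{M_n}$, where $t_{\ell(\un x)}$ is the number of urns adjacent to $\un x$; this is exactly the $\{0\}$-atom of the conditional Binomial law recorded in~(\ref{bin}).

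Next I would bound the neighbour count from below uniformly in $\un x$. By Remark~\ref{5} the loop contributes $2$ to $\deg_n(\un x)$, so the number of non-loop neighbours is $S(\un x)\deg(x_n)$, and since $\ell(\un x)\ge 1$ for every word the empty-sum convention in~(\ref{6}) forces $S(\un x)\ge 1$; equivalently, already the vertices obtained from $\un x$ by replacing its last coordinate $x_n$ with a $G$-neighbour produce $\deg(x_n)$ admissible edges in~(\ref{edgerule}). Hence $t_{\ell(\un x)}\ge \deg(x_n)\ge d_{\min}$ for every $\un x$. Because $t\mapsto(1-t/N^n)^{M_n}$ is decreasing, the conditional isolation probability is at most $(1-d_{\min}/N^n)^{M_n}$, and the elementary inequality $1-u\le\e{-u}$ together with $M_n=c_nN^n$ gives $(1-d_{\min}/N^n)^{M_n}\le\e{-d_{\min}M_n/N^n}=\e{-d_{\min}c_n}$. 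As this bound is uniform over the urn $\un x$, the law of total probability yields $\Pv(\deg(V)=0)\le\e{-d_{\min}c_n}$, which tends to $0$ since $c_n\to\infty$; the expected fraction of isolated vertices equals $\Pv(\deg(V)=0)$ for a uniform $V$, so it vanishes as well.

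There is no deep analytic difficulty here: once the conditioning on $V$'s urn is in place, the estimate is a single application of $1-u\le\e{-u}$ to the conditional Binomial~(\ref{bin}), requiring neither a second-moment computation nor a union bound over pairs. The only point that must be handled with care is the bookkeeping of the loops — one has to be certain that the degree appearing in the statement counts genuine neighbours, so that the correct uniform lower bound is $\deg(x_n)\ge d_{\min}$ and not a quantity contaminated by the artificial $+2$ from the loop. Everything else is routine.
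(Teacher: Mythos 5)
Your proof is correct and follows essentially the same route as the paper's: condition on the urn of $V$, identify the conditional isolation probability with the zero-atom $\left(1-t_{\ell(\un x)}/N^n\right)^{M_n}$ of the Binomial law (\ref{bin}), bound the neighbour count below by $d_{\min}$ uniformly in $\un x$, apply $1-u\le \e{-u}$, and average. If anything, your version is slightly cleaner than the paper's, which inserts an unnecessary (and imprecise) $(1+o(1))$ factor where your exact inequality $1-u\le\e{-u}$ suffices.
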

The following corollary is an immediate consequence of the Borel-Cantelli lemma.
\begin{cor}%\label{borelc}
If $\sum\limits_{n=1}^{\infty} c_n N^n e^{-d_{\min}c_n}<\infty$, then almost surely there will be only finitely many $n$-s, for which the graph $G^\mathrm{r}_n$ has isolated vertices.
\end{cor}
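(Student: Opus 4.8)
The plan is to deduce this directly from Theorem \ref{izo} via the first Borel--Cantelli lemma; the only genuine content beyond Theorem \ref{izo} is a union bound over the vertex set of $G_n^{\mathrm r}$, since Theorem \ref{izo} controls only the marginal isolation probability of a single (uniformly chosen) vertex, not the probability that \emph{some} vertex is isolated. For each $n$ let $A_n$ denote the event that $G_n^{\mathrm r}$ contains at least one isolated vertex. The assertion ``almost surely only finitely many $n$ have isolated vertices'' is exactly $\Pv(\limsup_n A_n)=0$, so by the first Borel--Cantelli lemma it suffices to prove $\sum_n \Pv(A_n)<\infty$.

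First I would bound $\Pv(A_n)$ by a union bound over the vertices. Writing $\deg(i)$ for the degree of vertex $i$ in $G_n^{\mathrm r}$ (loops not counted) and recalling that the node set is $\{0,\dots,M_n\}$,
\[ \Pv(A_n)=\Pv\Big(\bigcup_{i=0}^{M_n}\{\deg(i)=0\}\Big)\le \sum_{i=0}^{M_n}\Pv(\deg(i)=0). \]
The key observation is that the $M_n+1$ balls are thrown i.i.d.\ uniformly, so the vertices are exchangeable and each fixed vertex has the same isolation probability as a uniformly chosen vertex $V$; that is, $\Pv(\deg(i)=0)=\Pv(\deg(V)=0)$ for every $i$. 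Hence Theorem \ref{izo} applies termwise and yields
\[ \Pv(A_n)\le (M_n+1)\,\Pv(\deg(V)=0)\le (M_n+1)\, e^{-d_{\min}c_n}. \]

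Using $M_n=c_nN^n$ and $M_n+1\le 2M_n$ for all $n$ large enough that $M_n\ge 1$, the right-hand side is at most $2\,c_nN^n e^{-d_{\min}c_n}$, so
\[ \sum_n \Pv(A_n)\le 2\sum_n c_nN^n e^{-d_{\min}c_n}<\infty \]
by the hypothesis. The first Borel--Cantelli lemma then gives $\Pv(\limsup_n A_n)=0$, which is exactly the claim.

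I do not expect a serious obstacle here, as the corollary is genuinely immediate once Theorem \ref{izo} is available. The only points worth spelling out carefully are that Theorem \ref{izo} bounds the single-vertex (marginal) probability rather than $\Pv(A_n)$, so the union bound over all $M_n+1$ vertices is the step that actually does the work, and that this union bound is legitimate precisely because exchangeability of the i.i.d.\ balls lets one replace the isolation probability of a fixed label by the uniform-vertex bound of Theorem \ref{izo}. The trivial discrepancy of $M_n$ versus $M_n+1$ vertices is absorbed into the constant factor and does not affect summability.
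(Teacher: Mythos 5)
Your proof is correct and follows essentially the same route as the paper, which simply declares the corollary ``an immediate consequence of the Borel--Cantelli lemma'': the union bound over the $M_n+1$ exchangeable vertices that you make explicit is precisely the step the paper leaves implicit (visible in the factor $c_nN^n=M_n$ appearing in the summability hypothesis). Your care in noting that Theorem \ref{izo} only controls the marginal probability, and that exchangeability of the i.i.d.\ balls justifies applying it termwise, is a legitimate and welcome filling-in of detail, not a deviation.
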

The assumption of the Corollary is satisfied if e.g. $c_n >n\log (N +1)$.
\begin{proof}[Proof of Theorem \ref{izo}]
Given the N-adic expansion of $X^{(V)}$, the probability that a vertex is isolated depends on how many neighbors the vertex $(X^V_1\dots X^V_n)$ has in the deterministic model.
So we can write
\[ \Pv\left(\deg(V)=0\right)= \sum_{\un x\in \Sigma_n} \Pv(\deg(V)=0| (X^V_1\dots X^V_n)=\un x )\cdot \frac{1}{N^n} \]
As we have already seen, $\big(\deg(V)| (X^V_1\dots X^V_n)=\un x\big)$ follows a Binomial distribution with parameters $M_n$ and $\frac{\deg_n (\un x)-1}{N^n}$, so the conditional probability of  isolation is
\[ \ba \Pv(\deg(V)=0| (X^V_1\dots X^V_n)=\un x )&= \left(1-\frac{t_{\ell(\un x)}}{N^n} \right)^{M_n}\\ &\le e^{-\deg_n (\un x) c_n }(1+o(1)). \ea\]
Obviously $e^{-\deg_n(\un x) c_n }\le e^{-d_{\min} c_n }$ holds for all $\un x\in \Sigma_n$, which completes  the proof.
\end{proof}
\subsection{Decay of degree distribution}
Fix a constant $K$ such that for a standard normal variable $Z$, $\Pv(|Z|>K)<e^{-10}.$
We write
\[ I_{k,n}:=[c_n t_k-K \sqrt{ c_n t_k} , c_n t_k+K \sqrt{ c_n t_k}], \]
and
\[ k_0(n):=\max\left\{ (n+1)\frac{\log d_2}{\log d_1}, \frac{\log n}{\log d_1}\right\}.\]
Now we describe the degree distribution for the random model.
\begin{tm}\label{randdeg}
Let $k>k_0(n)$ and $u\in I_{k,n}$. Then for a uniformly chosen node $V$ in $G_n^{\text{r}}$
\[  \Pv\left(\deg (V)= u\right) = \left(\frac{n_1}{N}\right)^{k} \frac{n_2}{N}
\cdot \frac{1}{\sqrt{c_n t_k} }\phi\Big(\frac{u-c_n t_k}{\sqrt{c_n t_k (1- \frac{t_k}{N^n})}}\Big)\big(1+O(\frac{1}{\sqrt{c_n t_k}})\big),\]
where $\phi$ denotes the density function of a standard Gaussian variable.
%The limit distribution at other points \[ \lim_{n\to \infty} \Pv\left(\deg (V)\not\in \{[t_k-K \sqrt{t_k}, t_k+K \sqrt{t_k}], k\in \mathbb N\}\right) \le e^{-10}.\]
\end{tm}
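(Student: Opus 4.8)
The plan is to write the degree distribution as a mixture of binomial point-probabilities indexed by the value of $\ell(\un X)$, to isolate the single term $\ell(\un X)=k$ as the dominant contribution via a local central limit theorem, and to kill every other summand by concentration. First I would condition on the first $n$ digits of the code $X^{(V)}$ of the uniformly chosen node $V$. By (\ref{bin}) the conditional law of $\deg(V)$ depends on $\un X^V_n=\un x$ only through $\ell(\un x)$ and the type of $x_n$, so
\[
\Pv(\deg(V)=u)=\sum_{\un x\in\Sigma_n}\frac{1}{N^n}\,\Pv\big(\mathrm{BIN}(M_n,t_{\ell(\un x)}/N^n)=u\big).
\]
Grouping the sum according to the pair $(\mathrm{typ}(x_n),\ell(\un x))$ and using the combinatorial count carried out in Section \ref{degdistr}, a type-$1$ last digit with $\ell(\un x)=j<n$ occurs with probability
\[
\Pv\big(\ell(\un X)=j,\ x_n\in V_1\big)=\Big(\frac{n_1}{N}\Big)^{j}\frac{n_2}{N},
\]
which is exactly the prefactor appearing in the statement. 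The claim is that for $u\in I_{k,n}$ the term $j=k$ with $x_n\in V_1$ dominates, and all remaining type-$1$ classes $j\ne k$ and all type-$2$ classes are negligible.

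For the main term I would apply the local central limit theorem (de Moivre--Laplace) to the binomial with $m=M_n$, $p=t_k/N^n$, whose mean is $mp=c_n t_k$ and whose variance is $mp(1-p)=c_n t_k(1-t_k/N^n)$. Since $u\in I_{k,n}$ forces $|u-c_n t_k|\le K\sqrt{c_n t_k}$, i.e. $u$ lies within $O(1)$ standard deviations of the mean, the local limit theorem yields
\[
\Pv\big(\mathrm{BIN}(M_n,t_k/N^n)=u\big)=\frac{1}{\sqrt{c_n t_k\big(1-\tfrac{t_k}{N^n}\big)}}\,\phi\Big(\tfrac{u-c_n t_k}{\sqrt{c_n t_k(1-t_k/N^n)}}\Big)\Big(1+O\big(\tfrac{1}{\sqrt{c_n t_k}}\big)\Big).
\]
Replacing $\sqrt{1-t_k/N^n}$ in the prefactor by $1$ costs only a factor $1+O(t_k/N^n)$, which is absorbed in the error since $k<n$; multiplying the result by $(n_1/N)^k(n_2/N)$ then produces precisely the claimed main term of order $1/\sqrt{c_n t_k}$.

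The real work, and the step I expect to be the main obstacle, is showing that every other summand is exponentially smaller than this $1/\sqrt{c_n t_k}$-sized main term. For a type-$1$ class $j\ne k$ the binomial $\mathrm{BIN}(M_n,t_j/N^n)$ concentrates about $c_n t_j$, and since $t_k-t_{k-1}=d_1^{\,k}$ the means $c_n t_j$ are geometrically spaced: the relative gap $(t_k-t_{k-1})/t_{k-1}$ is bounded below by a positive constant, so any $u\in I_{k,n}$ deviates from the nearest wrong mean $c_n t_{k-1}$ (resp. $c_n t_{k+1}$) by a constant fraction of that mean. A Chernoff bound for the binomial tails then bounds each such contribution by $\exp(-\Omega(c_n t_k))$, and since there are at most $n$ classes with $t_k\ge d_1^{\,k}\ge n$ (because $k>k_0(n)\ge\log n/\log d_1$), the geometric series over $j$ stays exponentially small relative to the main term.

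The hypothesis $k>k_0(n)$ is used exactly to dispose of the type-$2$ nodes. By the characterization of the high-degree set $HD_n$ in Section \ref{degdistr}, for $k>k_0(n)$ the degree threshold $t_k$ strictly exceeds $\max_{\un y\in\Sigma_n^2}(\deg_n(\un y)-1)$, so the whole window $I_{k,n}$ lies above $c_n\cdot\max_{\un y\in\Sigma_n^2}(\deg_n(\un y)-1)$; hence every type-$2$ binomial has its mean strictly below the window and the same Chernoff estimate makes these contributions exponentially small as well. Collecting the local-limit main term with these exponential bounds on all off-diagonal and type-$2$ terms gives the stated asymptotic with multiplicative error $1+O(1/\sqrt{c_n t_k})$. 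The delicate point throughout is uniformity: the Chernoff exponents, the local-limit error, and the geometric summation must all be controlled uniformly in $k>k_0(n)$ and in $u\in I_{k,n}$, which is what forces the growth condition $c_n t_k\to\infty$ and the lower cutoff $k_0(n)$.
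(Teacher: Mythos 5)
Your proposal is correct and follows essentially the same route as the paper's own proof: a law-of-total-probability decomposition over the classes $\ell(\un x)=j$, the local de Moivre--Laplace theorem for the dominant class $j=k$ (with the factor $1-t_k/N^n$ absorbed into the error), and Chernoff bounds exploiting the geometric spacing of the means $c_n t_j$ to show the remaining terms are $o(1/\sqrt{c_n t_k})$. If anything, you are slightly more explicit than the paper about why $k>k_0(n)$ disposes of the type-$2$ classes, a point the paper leaves implicit in its definition of $k_0(n)$.
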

This immediately implies
\begin{cor}%\label{randdega}
The degree distribution of the random model is given by the following formula for $a,b\in [-K,K]$:
\[  \ba \Pv\left(\deg (V)\in [c_n t_k+a \sqrt{c_n t_k}, c_n t_k+b \sqrt{c_n t_k}]\right)& = \left(\frac{n_1}{N}\right)^k \frac{n_2}{N}\cdot\left(\Phi(b)-\Phi(a)\right)\\&+ O\Big( \Big(\frac{n_1}{N}\Big)^k\frac{1}{\sqrt{c_n t_k}}\Big),\ea \]
where $k>k_0(n)$ and $\Phi$ denotes the distribution function of a standard Gaussian variable.
So, for $u\in I_{k,n}, \ k> k_0(n)$ the tail of the probability distribution is:
\be\label{tail} \ba \Pv(\deg(V)>u)& =
\left( \frac{n_1}{N}\right)^{k+1}+ \left(\frac{n_1}{N}\right)^k \frac{n_2}{N}
\left( 1-\Phi\Big(\frac{u-c_n t_k}{\sqrt{c_n t_k (1-\frac{t_k}{N^n})}}\Big)\right) \\
&+\left(\frac{n_1}{N}\right)^{k+1} O\Big(\frac{1}{\sqrt{c_n t_k}}\Big).\ea \ee
\end{cor}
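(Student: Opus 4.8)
The corollary is a direct consequence of Theorem \ref{randdeg}, obtained by summing the pointwise local limit estimate over the appropriate window and then over the levels $k$. The key structural fact I will exploit is that a uniformly chosen node $V$ lands, conditionally on its first $n$ digits, in an urn $\un x$ with $\ell(\un x)=k$ with probability $\left(\frac{n_1}{N}\right)^k\frac{n_2}{N}$ (this is exactly the weight appearing in Theorem \ref{randdeg}, coming from the requirement that the last $k$ digits lie in $V_1$ and the $(k{+}1)$-st from the back lies in $V_2$), and that all such urns have the \emph{same} deterministic degree $t_k$, hence the same conditional Binomial law $BIN(M_n, t_k/N^n)$ from (\ref{bin}).

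\textbf{Step 1: the interval probability.} For fixed $k>k_0(n)$ I would sum the local estimate of Theorem \ref{randdeg} over $u$ ranging in $[c_nt_k+a\sqrt{c_nt_k},\,c_nt_k+b\sqrt{c_nt_k}]$. Writing each summand as a Gaussian density evaluated at the standardized variable and multiplying by the spacing $1$, this is a Riemann sum for $\int_a^b\phi(s)\,ds=\Phi(b)-\Phi(a)$ after the change of variables $s=(u-c_nt_k)/\sqrt{c_nt_k(1-t_k/N^n)}$; the mesh of this sum is $1/\sqrt{c_nt_k(1-t_k/N^n)}$, so the Riemann-sum error is $O(1/\sqrt{c_nt_k})$ uniformly on $[-K,K]$. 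Since $t_k/N^n\to 0$ in the relevant regime $k>k_0(n)$, the variance factor $1-t_k/N^n$ is $1+o(1)$ and may be absorbed into the error. The multiplicative $(1+O(1/\sqrt{c_nt_k}))$ from Theorem \ref{randdeg} contributes, after multiplying by the weight $\left(\frac{n_1}{N}\right)^k\frac{n_2}{N}$ and by the $\Theta(\sqrt{c_nt_k})$ points in the window, an additive term of order $\left(\frac{n_1}{N}\right)^k\frac{1}{\sqrt{c_nt_k}}$, yielding exactly the stated interval formula.

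\textbf{Step 2: the tail formula.} To obtain (\ref{tail}) for $u\in I_{k,n}$, I decompose the event $\{\deg(V)>u\}$ according to the level $\ell(\un x)$ of the urn where $V$ lands. Nodes at strictly higher levels $k'>k$ have degree concentrated around $c_nt_{k'}\gg u$, so each contributes essentially its full weight; summing the geometric-type weights $\sum_{k'>k}\left(\frac{n_1}{N}\right)^{k'}\frac{n_2}{N}$ telescopes to $\left(\frac{n_1}{N}\right)^{k+1}$, the first term in (\ref{tail}). (Here I use that the Gaussian window of level $k'$ lies entirely above $u$ up to an error absorbed in the $e^{-10}$ tail bound built into the choice of $K$.) The contribution of the current level $k$ is the weight $\left(\frac{n_1}{N}\right)^k\frac{n_2}{N}$ times the upper-tail probability $1-\Phi\!\big((u-c_nt_k)/\sqrt{c_nt_k(1-t_k/N^n)}\big)$, which is the second term, and lower levels $k'<k$ contribute negligibly since their degrees are concentrated far below $u$. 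Collecting the Riemann-sum and multiplicative errors from Step 1 gives the additive $\left(\frac{n_1}{N}\right)^{k+1}O(1/\sqrt{c_nt_k})$ remainder.

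\textbf{Main obstacle.} The routine part is the Riemann-sum approximation in Step 1; the delicate point is controlling the cross-level contributions in Step 2 — specifically, verifying that the Gaussian windows $I_{k',n}$ for neighboring levels $k'=k\pm 1$ do not overlap the threshold $u$ in a way that spoils the clean split, and that the aggregate error from all lower levels is genuinely of smaller order than the displayed remainder. This requires checking that the gaps $c_n(t_{k+1}-t_k)=c_n\,d_1^{k+1}$ between consecutive concentration points dominate the window widths $K\sqrt{c_nt_k}$, which holds precisely because $c_n\to\infty$ and $t_k$ grows geometrically; I would isolate this separation estimate as the one genuinely new inequality, the rest being bookkeeping on top of Theorem \ref{randdeg}.
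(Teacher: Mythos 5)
Your proposal is correct and is essentially the paper's own argument: the paper justifies the corollary in one sentence following its statement, namely by summing the local estimate of Theorem \ref{randdeg} over the window for the interval formula, and for (\ref{tail}) decomposing the event by the level $\ell$ of the urn, with the full geometric mass $\sum_{l\ge k+1}\left(\frac{n_1}{N}\right)^{l}\frac{n_2}{N}=\left(\frac{n_1}{N}\right)^{k+1}$ from higher levels plus the part of the binomial mass around $c_nt_k$ lying above $u$ --- exactly your Steps 1 and 2. Your Riemann-sum bookkeeping and the separation estimate $c_n d_1^{k+1}\gg K\sqrt{c_nt_k}$ merely make explicit what the paper leaves implicit; only note that the cross-level deficits should be controlled by the Chernoff bounds already used for $S_1,S_2$ in the proof of Theorem \ref{randdeg}, since the fixed $e^{-10}$ window bound alone is a constant and would not by itself yield the stated $O\big(1/\sqrt{c_nt_k}\big)$ remainder.
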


This holds because $\Pv(\deg(V)>u)$ equals the sum of all probability mass that is concentrated around $t_l$-s for $l\ge k+1$, resulting in the first term, plus the second term coming from the part greater than $u$ of the binomial mass around $t_k$.
As a consequence, the decay of the degree distribution follows a power law. Namely, the following holds
\begin{tm}\label{decay}Let
$$\gamma:=1+\frac{\log(\frac{N}{n_1})}{\log d_1}.$$
Then the decay of the degree distribution is:
\[  \Pv(\deg(V)>u)= u^{-\gamma +1}\cdot L(u),\]
where $L(u)$ is a bounded function:
\[ \frac{n_1}{N} \le L(u)\le \frac{N}{n_1}.\]
\end{tm}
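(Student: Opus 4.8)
The plan is to read off the claim directly from the tail formula (\ref{tail}), treating $\Pv(\deg(V)>u)$ as an (approximate) stair-step function of $u$ whose jumps sit at the bulk locations $c_n t_k$. Writing $L(u):=u^{\gamma-1}\,\Pv(\deg(V)>u)$, I would prove that $L$ stays inside $[n_1/N,\,N/n_1]$ by controlling how it oscillates between two consecutive jumps, so the content of the theorem is really the identification of the exponent together with a sharp bound on this oscillation.

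The heart of the argument is an elementary computation. From $t_k=\frac{d_1^{k+1}-1}{d_1-1}+1$ one has $t_k\asymp d_1^{k+1}/(d_1-1)$, so that $k+1\approx \log_{d_1}\!\big((d_1-1)t_k\big)$. Substituting this into the leading term $(n_1/N)^{k+1}$ of (\ref{tail}) and using the key identity $d_1^{\gamma-1}=N/n_1$ (immediate from $\gamma-1=\log(N/n_1)/\log d_1$), I obtain $(n_1/N)^{k+1}=d_1^{-(\gamma-1)(k+1)}\asymp \big((d_1-1)t_k\big)^{-(\gamma-1)}$, i.e.\ precisely a power law with exponent $\gamma-1$ in the degree scale $t_k$. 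Equivalently, when $u$ runs from one bulk center $c_n t_k$ to the next $c_n t_{k+1}$ it grows by the factor $t_{k+1}/t_k\to d_1$, while the tail drops by exactly one factor $n_1/N$; since $d_1^{-(\gamma-1)}=n_1/N$, this is the self-consistency that pins down the exponent.

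To get the two-sided bound on $L$, for a given $u$ I would let $k=k(u)$ be the unique index with $c_n t_k\le u< c_n t_{k+1}$ and sandwich $\Pv(\deg(V)>u)$ between its values at the neighbouring jump points, which differ by the factor $n_1/N$. Converting these bounds through $u^{\gamma-1}$ and using $d_1^{\gamma-1}=N/n_1$, the multiplicative oscillation of $L$ across one such octave is the factor $N/n_1$; tracking in addition the sub-leading Gaussian contribution $(n_1/N)^k\frac{n_2}{N}\big(1-\Phi(\cdot)\big)$ inside a bulk (which is of the same order, since $n_2/n_1=N/n_1-1$) is what yields the stated bounds $n_1/N\le L(u)\le N/n_1$ rather than a single-octave band.

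The main obstacle is that $L$ is genuinely oscillatory: it is the standard phenomenon that a discrete, essentially geometric lattice of degree values $t_k$ forces the slowly varying factor to wobble rather than converge, so the work lies not in identifying the exponent but in bounding this wobble sharply and uniformly in $n$. Concretely, I expect the delicate points to be (i) handling the transition regions $u\in I_{k,n}$, where the Gaussian term interpolates between $1$ and $0$ and must be combined with the piecewise-constant part without overshooting either bound, and (ii) showing that the relative errors $O(1/\sqrt{c_n t_k})$ in (\ref{tail}), together with the $o(1)$ corrections in $t_{k+1}/t_k\to d_1$ for small $k$, can be absorbed so that the clean bounds hold for every admissible $u$ (equivalently every $k>k_0(n)$) and not merely asymptotically.
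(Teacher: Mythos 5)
Your proposal is correct and takes essentially the same route as the paper: the paper also reads the exponent straight off the tail formula (\ref{tail}) via the identity $d_1^{\gamma-1}=N/n_1$ (showing that for $u\in I_{k,n}$ one has $u\approx d_1^{k}$ and $\Pv(\deg(V)>u)=\left(\frac{n_1}{N}\right)^{k}C(u)$ with $C(u)\in[\frac{n_1}{N},1]$), and it treats $u$ lying between consecutive bulks exactly as you do, by monotonicity of the tail sandwiched between the values at $c_nt_k$ and $c_nt_{k+1}$, at the cost of one factor $N/n_1$. The only difference is presentational: you spell out the oscillation bookkeeping for $L(u)$ that the paper compresses into two lines.
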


\begin{proof}[The idea of the proof of Theorem \ref{randdeg}]
The conditional distribution of the degree of a node $V$ conditioned on the n-digit N-adic expansion of $X^{(V)}_n=\un x$ follows a $BIN(c_n N^n, \frac{t_{\ell(\un x)}}{N^n})$ law. This is close to a $POI(c_n t_{\ell(\un x)})$ random variable, because $c_n$ and $t_{\ell(\un x)}$  tend to infinity in a much smaller order than $N^n$.   Now for the $POI(c_n t_{\ell(\un x)})$ variable, the Central Limit Theorem holds with an error term of order $1/\sqrt{c_n t_{\ell(\un x)}}$. Now the unconditional degree distribution comes from the law of total probability and from the fact that all other errors are negligible.
\end{proof}

\begin{proof}[Proof of Theorem \ref{randdeg}]
We determined the degree distribution of the deterministic model under assumption (\ref{degassum}), see Section \ref{degdistr} for details. Recall that if $k> k_0(n)$, then the mass at $t_k$ is
\[ p_k:= \Pv(\ell(\un x)=k)=\left(\frac{n_1}{N}\right)^k \frac{n_2}{N}. \]
We show that in the random model $G_n^{\text{r}}$, these Dirac masses are turned into Gaussian  masses centered at $c_n t_k$. Suppose $u\in I_{k,n}$. By the law of total probability, we have
\be\label{12} \ba \Pv(\deg(V)=u) &= \Pv(\deg(V)=u  | \left(X_{1}^{V}\dots X_{n}^{V}\right)=\un x, \ell(\un x)=k) \cdot p_k\\
&+ S_1 +S_2 ,\ea\ee
where
\[  \ba S_1&= \sum_{j=1}^{k-1} \Pv(\deg(V)=u  | \left(X_{1}^{V}\dots X_{n}^{V}\right)=\un x, \ell(\un x)=j)\cdot p_j\\
S_2&=\sum_{j=k+1}^{n} \Pv(\deg(V)=u  | \left(X_{1}^{V}\dots X_{n}^{V}\right)=\un x, \ell(\un x)=j)\cdot p_j\ea \]
 $S_1$ and $S_2$ combines the total contribution of cases when  $\ell(X^V_1 \dots X^V_{n}) \neq k$, i.e. referring to the urn model of our random graph,  $S_1+S_2$ settles the cases when the random ball $V$ falls into an urn which has degree different from $t_k$ in $G_n$. As a first step in our proof we show that the right hand side in the first line of (\ref{12}) gives the formula in Theorem \ref{randdeg}, then as a second step we verify that $S_1+S_2$ is negligible.

{\bf First step:} Following the standard proof of the local form of de Moivre-Laplace CLT, we obtain that for $u \in I_{k,n}$
\[
  \begin{split}
 \mathbb{P}&\Big( \deg(V)=u  | \left(X_{1}^{V}\dots X_{n}^{V}\right)=\un x \Big)
\\
  &
= \frac{1}{\sqrt[]{c_nt_{\ell(\un x)}(1-\frac{t_{\ell(\un x)}}{N^n})}}
\phi\left(\frac{u-c_nt_{\ell(\un x)}}{\sqrt[]{c_nt_{\ell(\un x)}(1-\frac{t_{\ell(\un x)}}{N^n})}}\right)
\cdot  \left( 1+O\big(  \frac{1}{\sqrt{c_nt_{\ell(\un x)}}}\big)\right).
  \end{split}
\]
We can neglect  $1-\frac{t_{\ell(\un x)}}{N^n}$.  This completes the first step.

{\bf Second step:}  Since $u\in I_{k,n}$ we have:
\be\label{errors} \ba S_1 \le &\sum_{j=1}^{k-1}\Pv(\deg(V)> t_k-K\sqrt{t_k} | (X^V_1\dots X^V_n)=\un x, \ell(\un x)= j)\cdot p_j \\
S_2 \le  &\sum_{j=k+1}^{n}\Pv(\deg(V)< t_k+K \sqrt{t_k} | (X^V_1\dots X^V_n)=\un x, \ell(\un x)= j)\cdot p_j\ea \ee
Now we use the fact known from Chernoff-bounds: for an $Z \sim BIN(m,p)$ variable
\[ \Pv(Z \ge (1+\delta)\Ev(Z))\le e^{-\frac12 \delta^2 \Ev(Z)},  \] and the same bound holds for $\Pv(Z \le (1-\delta)\Ev(S))$.
By (\ref{bin}), to estimate each summand in (\ref{errors}) we can apply these inequalities for $Z_j \sim BIN(c_n N^n, \frac{t_{j}}{N^n}), \ j\in \{1,\dots, n\}\setminus \{ k\}$, yielding an upper bound
\[\ba S_1+S_2&\le \sum_{j=1}^{k-1}e^{-\frac12 d_1^{2k-j} c_n}\cdot p_j  +\sum_{j=k+1}^n e^{-\frac12 (1-d_1^{k-j})^2 d_1^j c_n}\cdot p_j \\&\le e^{-\frac18 d_1^{k} c_n}.\ea\]
Since $e^{-\frac18 d_1^{k} c_n}=o(\frac{1}{\sqrt{c_n t_k}})$, the statement of Theorem \ref{randdeg} follows.
\end{proof}
Now we are ready to prove the main result of the section.
\begin{proof}[Proof of Theorem \ref{decay}]
If $u \in I_{k,n}$, then
\[ u= d_1^k \cdot \left(1+ O\left(\frac{1}{d}\right)\right).\]
Using (\ref{tail}) we obtain that there exists  $ C(u) \in [\frac{n_1}{N}, 1]$ such that
\[ \Pv(\deg(V) > u)= \left(\frac{n_1}{N}\right)^k C(u).\]
The last two formulas immediately imply the assertion  of the Theorem whenever $u \in I_{k,n}$. Actually in this case we have $\frac{n_1}{N}\le L(u)\le 1$.  If $u\not \in \cup_{k} I_{n,k}$, then there exists $k=k(u)$ such that $u \in (c_n t_k, c_n t_{k+1})$. By monotonicity of the distribution function we have \[ \Pv(\deg(V)> c_n t_{k+1})\le \Pv(\deg(V)>u)\le \Pv(\deg(V)> c_n t_k).\]
Applying the theorem for $c_n t_{k+1}$ and $c_n t_k$, we loose a factor of $\frac{N_1}{n}$ in the upper bound of $L(u)$ and the assertion of the Theorem follows.
\end{proof}
\begin{comment}

We may also want to see that in the graph $G^\mathrm{r}_n$, there is no vertex which behaves irregularly, i.e. it has significantly more or less neighbors than it code determines. Then we can require for $c_n N^n e^{-\frac18 c_n} $ to be summable. If this is the case, then the graph basically looks similar to its deterministic pair $G_n$ blown up by a factor of $c_n$. For this, $c_n\ge8n\log(N+1)$ is enough.
\end{comment}

\end{document}